\newtheorem*{corollary*}{Corollary}
\theoremstyle{definition}
\title{Realizing the Tutte polynomial as a cut-and-paste K-theoretic invariant} 
\author{Mauricio Gomez Lopez}
\email{gomezlom@lafayette.edu}
\address{Department of Mathematical Sciences \\
	       Lafayette College, Pardee Hall\\
	       Easton, PA, 18042 \\USA}
\date{\today}
\begin{document}

\maketitle
\vspace{-0.5cm}
\begin{abstract}
Cut-and-paste $K$-theory is a new variant of higher algebraic $K$-theory that has proven to be useful in problems involving decompositions of combinatorial and geometric objects, e.g., scissors congruence of polyhedra and reconstruction problems in graph theory. In this paper, we show that this novel machinery can also be used in the study of matroids. Specifically, via the $K$-theory of categories with covering families developed by Bohmann-Gerhardt-Malkiewich-Merling-Zakharevich, we realize the Tutte polynomial map of Brylawski (also known as the \textit{universal Tutte-Grothendieck invariant} for matroids) as the $K_0$-homomorphism induced by a map of $K$-theory spectra.
\end{abstract}

\tableofcontents 

\section{Introduction} \label{intro}

The work in this paper rests upon the following two mathematical foundations: 

\vspace{0.2cm}

\begin{itemize}

\item[$\cdot$] The work of Brylawski on decompositions of matroids in \cite{Br71}. We remark that matroids  are referred to as 
\textit{combinatorial pregeometries} in \cite{Br71}.  

\vspace{0.2cm}

\item[$\cdot$] The emerging field of \textit{cut-and-paste K-theory,} a new variant of higher algebraic $K$-theory suited 
to the study of decompositions of geometric and combinatorial objects 
(e.g., decompositions of polyhedra \cite{Zak12}, \cite{Zak17}, \cite{BGMMZ}). 

\end{itemize}

\vspace{0.2cm}

Cut-and-paste $K$-theory (also known as \textit{combinatorial K-theory}) has its roots in two different, yet related, research areas: The
work of Zakharevich in \cite{Zak12} and \cite{Zak17} to study scissors congruence problems through the lens of algebraic $K$-theory, 
and the study of the $K$-theory of varieties carried out 
by Campbell and Zakharevich in \cite{Zak17b}, \cite{Cam19}, and \cite{CZ22}. 
There are currently several forms of cut-and-paste $K$-theory available in the literature. These include Zakharevich's  \textit{$K$-theory of assemblers}
\cite{Zak17}; \textit{Squares $K$-theory,} developed in 
\cite{CKMZ23}, which offers the appropriate framework to deal with
cut-and-paste problems involving manifolds (see \cite{HMM}); and
\textit{$K$-theory of categories with covering families,} recently introduced in \cite{BGMMZ}, which 
generalizes Zakharevich's $K$-theory of assemblers. 

It is this last form of combinatorial $K$-theory that we shall work with
in this paper. A great deal of the power of this construction 
is due to the generality of 
the definition of category with covering families, 
which allows much 
flexibility in terms of applications. As explained in \cite{BGMMZ}, 
there are many natural examples of categories with covering families, 
including Grothendieck  sites, assemblers, categories of polyhedra, and groups,
all endowed with a suitable \textit{covering family structure} (a notion we shall introduce 
in Definition \ref{catcov}). Another recent entry to this list of applications is the 
work of Calle and Gould in \cite{CG}, in which they give a $K$-theoretic
formulation of the edge reconstruction conjecture for graphs using the
machinery of categories with covering families from \cite{BGMMZ}. 

This paper offers a new application of $K$-theory of categories with 
covering families. Namely, we use this $K$-theoretic machinery to study
invariants of matroids, specifically the \textit{Tutte polynomial,} arguably the
most fundamental matroid invariant. Matroids are combinatorial objects that 
are meant to simultaneously generalize finite configurations of vectors and graphs. 
Consider, for example, the following two figures: 

\begin{equation} \label{ardila.matroids}
\quad
\end{equation}
\vspace{-1cm}
\[
\quad \textcolor{white}{a} \hspace{2.3cm}
\begin{tikzpicture}
[axis/.style={->,thick}]
\node at (0,2.3,0) {\underline{$X$}};
	\draw[axis] (0,0,0) -- (2,0,0) node[anchor=west]{$b$};
        \draw[axis] (0,0,0) -- (2,0,2) node[anchor=west]{$c$};
	\draw[axis] (0,0,0) -- (0,1.5,0) [axis/.style={->,red}] node[anchor=west]{\textcolor{white}{a}$a$};
	\draw[axis] (0,0,0) -- (0,0,1.5) node[anchor=west] {\textcolor{white}{a}$d$};
	\draw[axis] (0,0,0) -- (0,0,3) node[anchor=west] {\textcolor{white}{a}$e$};
	
\end{tikzpicture}
\qquad \qquad
\begin{tikzpicture}
      \tikzset{enclosed/.style={draw, circle, inner sep=0pt, minimum size=.2cm, fill=blue}}
      
      \node (v0) at (-5, 0.9) {\underline{$G$}};
      \node[enclosed] (v1) at (-6,0) {};
      \node[enclosed] (v2) at (-6,-2) {};
      \node[enclosed] (v3) at (-4.5, -1) {};
      \node[enclosed] (v4) at (-3, -1) {};
      \node at (0,-2.6) {};
      
      \draw [-] (v2) to [out = 120, in=240] node[left,midway] {$d$} (v1);
      \draw [-] (v2) to [out = 60, in=300] node[right,midway] {$e$} (v1);
      \draw (v1) -- (v3) node[midway, above] {$b$};
      \draw (v2) -- (v3) node[midway, below] {$c$};
      \draw (v3) -- (v4) node[midway, above] {$a$};
\end{tikzpicture}
\]
Both the configuration $X$ of vectors in $\mathbb{R}^3$ on the left and the graph $G$ on the right induce a matroid (in fact, they induce isomorphic matroids).
More precisely, as we shall discuss in \S \ref{matroid.theory.background}, a matroid consists of a finite \textit{ground set} $E$ and 
a collection of subsets of $E$, called \textit{independent sets}. For example, referring to (\ref{ardila.matroids}), 
the ground set for the matroids induced by $X$ and $G$ is  
$E = \{ a, b, c, d, e \}$. For the matroid induced by $X$, the independent sets
are the sets of vectors which are linearly independent. On the other hand, for the matroid induced by
$G$, the independent sets are the sets of edges that do not contain any closed edge-paths (see Example \ref{minor.ex}). 
As the reader can verify, both matroids have the same independent sets. As we will see in 
Definition \ref{matroid}, the independent sets of a matroid must satisfy certain axioms, which 
are meant to capture the combinatorial essence of linear independence.  

Given a matroid $M$, its \textit{Tutte polynomial} $T(M; x,y)$ 
is a polynomial in two variables $x,y$ with positive integer coefficients. This polynomial,
which can be regarded as a generalization of the chromatic and flow polynomials for graphs, 
has its roots in the work of Tutte in \cite{Tut47}, which anticipated the use
of $K$-theoretical constructions in the study of graphs. The name \textit{Tutte polynomial} was
penned by Crapo in \cite{Cra69}.  Further landmark contributions to the study
of the Tutte polynomial were made by Brylawski in \cite{Br71}, in which he carried over to matroid theory 
the $K$-theoretic ideas permeating commutative algebra and algebraic topology at the time.  

It is precisely the work of Brylawski in \cite{Br71} that motivates and facilitates the connection between cut-and-paste
$K$-theory and matroid theory that we explore in this paper. To formulate our main results, let us
denote by $\mathcal{M}$ the set of isomorphism classes of matroids. The notion of matroid isomorphism, plus other
background material from matroid theory, will be reviewed in \S \ref{matroid.theory.background}. 
Given that the Tutte polynomial is an isomorphism invariant, we can define a function 
$\mathcal{M} \rightarrow \mathbb{Z}[x,y]$ which sends an isomorphism class $[M]$ to its
Tutte polynomial $T(M; x,y)$. This map naturally extends to a group homomorphism 
\begin{equation} \label{tutte.homom} 
\mathcal{T}: \mathbb{Z}[\mathcal{M}] \longrightarrow \mathbb{Z}[x,y]
\end{equation} 
defined on the free abelian group $\mathbb{Z}[\mathcal{M}]$. 
Our first main theorem states that this group homomorphism 
$\mathcal{T}$ can be realized as the $0$-th level of a map between 
two $K$-theory spectra. To make sense of this statement, it is helpful
to give a brief overview of the pipeline of $K$-theory of categories with covering families: 

\begin{itemize}

\item[$\bullet$] This $K$-theory takes as input a category with a \textit{covering family structure}, i.e., 
a small category $\mathcal{C}$ together with a collection $\mathcal{S}$ of finite multi-sets 
$\{ C_i \rightarrow B\}_{i\in I}$ of morphisms in $\mathcal{C}$, called \textit{covering families}, subject to certain conditions.  
We shall elaborate this definition in \S \ref{k.theory.background}. 

\vspace{0.2cm}

\item[$\bullet$] The output of this $K$-theory pipeline is a spectrum $K(\mathcal{C})$. 
An important feature of this spectrum is that its $0$-th homotopy group $K_0(\mathcal{C})$ records 
all possible ways of decomposing objects in $\mathcal{C}$ via the covering families in $\mathcal{S}$. This statement
shall be made precise in Theorem \ref{k0.result}.   

\end{itemize}

In Section \S \ref{matroid.theory.background}, we shall define a (small) category 
$\mathbf{Mat}_{+}$ of matroids, with a distinguished base-point object $\ast$,
where morphisms
are functions preserving independent sets
(more details will be provided in Definitions 
\ref{catmat} and \ref{catmat.base}). 
Then, in Section \S \ref{cov.fam.sec}, we shall construct two 
covering family structures on $\mathbf{Mat}_{+}$: 

\begin{itemize} 

\vspace{0.2cm}

\item[(i)] $\mathcal{S}^{\cong}$, consisting of all possible isomorphisms in $\mathbf{Mat}_{+}$.

\vspace{0.2cm}

\item[(ii)] $\mathcal{S}^{\mathrm{tc}}$, consisting of a class of covering families called \textit{Tutte coverings}. 
One can view these coverings as categorical reformulations of the \textit{Tutte decompositions} considered by
Brylawski in \cite{Br71}.
\end{itemize}

The triples $(\mathbf{Mat}_{+}, \mathcal{S}^{\cong}, \ast)$ and 
 $(\mathbf{Mat}_{+}, \mathcal{S}^{\mathrm{tc}}, \ast)$ are then 
categories with covering families, in the sense of \cite{BGMMZ},  
which we shall denote by $\mathbf{Mat}^{\cong}$ and  $\mathbf{Mat}^{\mathrm{tc}}$,
respectively (we will revisit these constructions in detail in 
 \S \ref{cov.fam.sec}). 
In light of Theorem \ref{k0.result}, we will immediately have
\begin{equation} \label{k0.iso.mat}
K_0(\mathbf{Mat}^{\cong}) = \mathbb{Z}[\mathcal{M}].
\end{equation}
Also, by construction, $\mathcal{S}^{\mathrm{tc}}$ shall
extend the structure $\mathcal{S}^{\cong}$, which will give us 
a canonical morphism 
\begin{equation} \label{first.gamma}
\Gamma: \mathbf{Mat}^{\cong} \longrightarrow \mathbf{Mat}^{\mathrm{tc}}
\end{equation}
of categories with covering families (see Remark \ref{catcov.cat}), which in turn 
induces a group homomorphism 
\begin{equation} \label{second.gamma}
\gamma: K_0(\mathbf{Mat}^{\cong}) \longrightarrow K_0(\mathbf{Mat}^{\mathrm{tc}}).
\end{equation}
Our first main theorem in this paper describes the isomorphism type of 
the group $K_0(\mathbf{Mat}^{\mathrm{tc}})$ and provides an explicit description
of the homomorphism $\gamma$ appearing in (\ref{second.gamma}). More precisely,
we have the following.  

\theoremstyle{plain} \newtheorem{thma}{Theorem}[section]
\renewcommand{\thethma}{\Alph{thma}}

\begin{thma} \label{thma}
For the category with covering families $\mathbf{Mat}^{\mathrm{tc}}$, the following holds:

\begin{itemize}

\vspace{0.2cm}

\item[(i)] There is a canonical isomorphism 
\begin{equation} \label{k0.tc.mat}
\rho: K_0(\mathbf{Mat}^{\mathrm{tc}}) \stackrel{\cong}{\longrightarrow} \mathbb{Z}[x,y]
\end{equation}
of abelian groups. 

\vspace{0.2cm}

\item[(ii)] Via the identifications 
$K_0(\mathbf{Mat}^{\cong}) = \mathbb{Z}[\mathcal{M}]$
and $K_0(\mathbf{Mat}^{\mathrm{tc}}) \cong \mathbb{Z}[x,y]$
given in (\ref{k0.iso.mat}) and (\ref{k0.tc.mat}) respectively, 
the homomorphism
$\gamma: K_0(\mathbf{Mat}^{\cong}) \rightarrow K_0(\mathbf{Mat}^{\mathrm{tc}})$
induced by the morphism 
$\Gamma: \mathbf{Mat}^{\cong} \rightarrow \mathbf{Mat}^{\mathrm{tc}}$
is the group homomorphism 
\[
\mathcal{T}: \mathbb{Z}[\mathcal{M}] \longrightarrow \mathbb{Z}[x,y]
\]
which maps an isomorphism class $[M]$ to its Tutte polynomial $T(M; x,y)$.
\end{itemize}

\end{thma}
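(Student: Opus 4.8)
The strategy is to identify $K_0(\mathbf{Mat}^{\mathrm{tc}})$ with the \emph{additive Tutte--Grothendieck group of matroids} and then to invoke the computation of that group, essentially due to Brylawski \cite{Br71}. First I would feed $\mathbf{Mat}^{\mathrm{tc}}$ into Theorem \ref{k0.result}: this presents $K_0(\mathbf{Mat}^{\mathrm{tc}})$ as the abelian group generated by the isomorphism classes of objects of $\mathbf{Mat}_{+}$, modulo the relation $[\ast]=0$ together with one relation $[B]=\sum_{i\in I}[C_i]$ for each covering family $\{C_i\to B\}_{i\in I}$ in $\mathcal{S}^{\mathrm{tc}}$. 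Since the base point $\ast$ contributes only $[\ast]=0$ and the isomorphisms in $\mathcal{S}^{\cong}\subseteq\mathcal{S}^{\mathrm{tc}}$ contribute nothing beyond the identification of isomorphic matroids, this group is the quotient of $K_0(\mathbf{Mat}^{\cong})=\mathbb{Z}[\mathcal{M}]$ (via (\ref{k0.iso.mat})) by the subgroup $R$ generated by the relations $[B]-\sum_{i\in I}[C_i]$ attached to the Tutte coverings.

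Next I would unwind the definition of Tutte covering from \S\ref{cov.fam.sec}. The point is that every Tutte covering of a matroid $M$ is built from the elementary deletion--contraction coverings $\{\,M\setminus e\to M,\ M/e\to M\,\}$ indexed by those $e\in E(M)$ that are neither loops nor coloops; consequently $R$ equals the subgroup of $\mathbb{Z}[\mathcal{M}]$ generated by the Tutte--Grothendieck relations
\[
[M]-[M\setminus e]-[M/e],\qquad e\in E(M)\ \text{a non-loop, non-coloop element.}
\]
Thus $K_0(\mathbf{Mat}^{\mathrm{tc}})\cong\mathbb{Z}[\mathcal{M}]/R$ is the additive Tutte--Grothendieck group, and part (i) becomes the statement that $[M]\mapsto T(M;x,y)$ induces an isomorphism of this group onto $\mathbb{Z}[x,y]$. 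This map is well defined because the Tutte polynomial obeys the deletion--contraction recursion (classical; see \cite{Br71}); it is surjective because the matroid $B_{i,j}$ that is a direct sum of $i$ coloops and $j$ loops has $T(B_{i,j};x,y)=x^{i}y^{j}$; and it is injective because, by induction on the size of the ground set (the base case being a matroid all of whose elements are loops or coloops, which splits as a direct sum of one-element matroids and is therefore some $B_{i,j}$), the group $\mathbb{Z}[\mathcal{M}]/R$ is generated by the classes $[B_{i,j}]$, and these are sent to the $\mathbb{Z}$-basis $\{x^{i}y^{j}\}_{i,j\geq 0}$ of $\mathbb{Z}[x,y]$. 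This yields the canonical isomorphism $\rho$ of (\ref{k0.tc.mat}), characterized by $\rho([M])=T(M;x,y)$, and proves part (i).

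Part (ii) is then formal. The morphism $\Gamma$ of (\ref{first.gamma}) is the identity on underlying categories and merely enlarges the covering family structure from $\mathcal{S}^{\cong}$ to $\mathcal{S}^{\mathrm{tc}}$, so under the identifications above $\gamma$ is the quotient projection $\mathbb{Z}[\mathcal{M}]\to\mathbb{Z}[\mathcal{M}]/R$, i.e.\ $\gamma([M])=[M]$. Composing with $\rho$ gives $(\rho\circ\gamma)([M])=T(M;x,y)$, which is precisely the defining formula for $\mathcal{T}$. I expect the step requiring the most care to be the unwinding in the second paragraph: one must check that the covering-family axioms imposed on $\mathcal{S}^{\mathrm{tc}}$ in \S\ref{cov.fam.sec} do not smuggle in any covering whose associated $K_0$-relation fails to be a consequence of the elementary deletion--contraction relations, so that $R$ is exactly the Tutte--Grothendieck subgroup and nothing larger. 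The only other slightly delicate ingredient is the matroid-theoretic fact, used in the base case above, that a matroid in which every element is a loop or a coloop is the direct sum of its one-element restrictions.
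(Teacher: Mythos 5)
Your proposal is correct, but it reaches the key point — injectivity of the map to $\mathbb{Z}[x,y]$ — by a genuinely different mechanism than the paper. The common skeleton is the same: both invoke Theorem \ref{k0.result} to present $K_0(\mathbf{Mat}^{\mathrm{tc}})$ as $\mathbb{Z}[\mathcal{M}]$ modulo Tutte-covering relations (this is Remark \ref{cov.mat.morhp}), and both use the induction on non-degenerate elements to reduce every class to classes of matroids $\varepsilon^m\oplus\sigma^n$ (your generation step is exactly Proposition \ref{ind.refine}, including the observation from Remark \ref{ist.loop} that a matroid with no non-degenerate elements splits as $\varepsilon^m\oplus\sigma^n$). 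The divergence is in how one sees that these classes stay linearly independent in the quotient. The paper proves this internally: Proposition \ref{ind.ind} shows, by the diamond argument with the parallel/coparallel case split, that the multi-set of indecomposable leaves of a deletion-contraction tree is independent of the tree; this gives freeness, after which $\rho$ is defined on the free generators by $[\varepsilon^m\oplus\sigma^n]\mapsto x^my^n$ and the identity $\rho\circ\gamma=\mathcal{T}$ is verified by checking the recursion. You instead define $\rho$ as the map induced by $\mathcal{T}$ itself, importing from Brylawski the classical facts that the Tutte polynomial is well defined, satisfies deletion-contraction for every choice of non-degenerate element, and takes the value $x^iy^j$ on $B_{i,j}$; injectivity then falls out of the generators-to-basis argument, and part (ii) becomes tautological since $\rho$ was built from $\mathcal{T}$. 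This is not circular — the statement of the theorem already presupposes the invariant $\mathcal{T}$, and the paper's own Definition \ref{tutte.pol} rests on the same classical well-definedness — but it does mean the paper's central lemma (Proposition \ref{ind.ind}, in effect a categorical re-proof of Brylawski's well-definedness theorem) has no counterpart in your argument: the paper buys self-containedness, you buy brevity at the cost of an external input. Finally, the step you rightly flag as delicate — that the relation subgroup $R$ is generated by the elementary relations $[M]-[M\backslash e]-[M/e]$ together with isomorphism identifications — is indeed needed in your formulation and does hold: every Tutte covering is the leaf-to-root family of a deletion-contraction tree, and an induction on the tree (splittings occur only at non-degenerate elements, by Definition \ref{treediag}) decomposes its relation into elementary ones; the paper sidesteps isolating this by always working with tree coverings directly.
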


Part (ii) of the previous statement gives the realization 
of the \textit{Tutte polynomial map}
$\mathcal{T}: \mathbb{Z}[\mathcal{M}] \longrightarrow \mathbb{Z}[x,y]$
as the $0$-th level of a map between $K$-theory spectra that
we promised earlier in this introduction.  

As we shall see in Section 
\S \ref{ring.struct.sec}, the direct sum operation on matroids
induces a ring structure on both 
$K_0(\mathbf{Mat}^{\cong})$ and $K_0(\mathbf{Mat}^{\mathrm{tc}})$.
With these ring structures in place, the group homomorphisms 
appearing in Theorem \ref{thma} become ring homomorphisms. 
Explicitly, we shall prove the following.

 \theoremstyle{plain} \newtheorem{thmb}[thma]{Theorem}

\begin{thmb} \label{thmb}
Let $+$ denote the addition operation in both 
$K_0(\mathbf{Mat}^{\cong})$ and $K_0(\mathbf{Mat}^{\mathrm{tc}})$. 

\begin{itemize}

\vspace{0.2cm}

\item[(i)] In both $K_0(\mathbf{Mat}^{\cong})$ and $K_0(\mathbf{Mat}^{\mathrm{tc}})$, 
setting
\begin{equation} \label{prod.oplus}
[M] \cdot [N] := [M \oplus N]
\end{equation}
gives a well-defined product on generators, and hence a product on
$K_0(\mathbf{Mat}^{\cong})$ and $K_0(\mathbf{Mat}^{\mathrm{tc}})$. 
Furthermore, the operations $+$, $\cdot$ define
commutative ring structures on 
$K_0(\mathbf{Mat}^{\cong})$ and $K_0(\mathbf{Mat}^{\mathrm{tc}})$.

\vspace{0.2cm}

\item[(ii)] With the ring structures defined above, the group homomorphisms 
$\rho$, $\gamma$, and $\mathcal{T}$ from Theorem \ref{thma} become ring homomorphisms. 

\end{itemize}

\end{thmb}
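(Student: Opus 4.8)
The plan is to reduce both statements to (a) the presentation of $K_0$ supplied by Theorem \ref{k0.result}, (b) the elementary fact that matroid direct sum is, up to canonical isomorphism, a commutative, associative, unital operation with unit the empty matroid $U_{0,0}$ (the matroid on the empty ground set), and (c) the classical identity $T(M\oplus N;x,y)=T(M;x,y)\,T(N;x,y)$ together with its infinitesimal counterpart, the compatibility of $\oplus$ with deletion, contraction, and loop/coloop status: $(M\oplus N)\setminus e=(M\setminus e)\oplus N$, $(M\oplus N)/e=(M/e)\oplus N$, and $e$ is a loop (resp.\ coloop) of $M$ iff it is a loop (resp.\ coloop) of $M\oplus N$.

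\emph{Part (i).} For $K_0(\mathbf{Mat}^{\cong})$: by (\ref{k0.iso.mat}) this group is the free abelian group $\mathbb{Z}[\mathcal{M}]$ on isomorphism classes of matroids, so the rule $([M],[N])\mapsto[M\oplus N]$ --- well defined on $\mathcal{M}\times\mathcal{M}$ because $\oplus$ respects isomorphism --- extends uniquely to a $\mathbb{Z}$-bilinear product $\cdot$ on $\mathbb{Z}[\mathcal{M}]$, and distributivity over $+$ is then automatic. Commutativity, associativity, and the fact that $[U_{0,0}]$ is a two-sided identity follow on generators from the natural isomorphisms $M\oplus N\cong N\oplus M$, $(M\oplus N)\oplus P\cong M\oplus(N\oplus P)$, and $M\oplus U_{0,0}\cong M$. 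For $K_0(\mathbf{Mat}^{\mathrm{tc}})$: since $\mathcal{S}^{\mathrm{tc}}$ extends $\mathcal{S}^{\cong}$, Theorem \ref{k0.result} exhibits $K_0(\mathbf{Mat}^{\mathrm{tc}})$ as the quotient $\mathbb{Z}[\mathcal{M}]/R$, with $R$ the subgroup generated by the relations $[B]-\sum_i[C_i]$ attached to the Tutte coverings $\{C_i\to B\}$, and identifies the map $\gamma$ of (\ref{second.gamma}) with the canonical surjection $\mathbb{Z}[\mathcal{M}]\twoheadrightarrow\mathbb{Z}[\mathcal{M}]/R$. To transport the product to this quotient it suffices to show $R$ is an ideal of $(\mathbb{Z}[\mathcal{M}],\cdot)$; as $R$ is generated by the Tutte-covering relations and $\mathbb{Z}[\mathcal{M}]$ by the classes $[N]$, this comes down to verifying that for every Tutte covering $\{C_i\to B\}$ and every matroid $N$ the multi-set $\{C_i\oplus N\to B\oplus N\}$ (with morphisms $f_i\oplus\mathrm{id}_N$) is again a Tutte covering. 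This is the one genuinely structural point of part (i), and it is where the compatibility of $\oplus$ with deletion, contraction, and loop/coloop status enters; granting it, $K_0(\mathbf{Mat}^{\mathrm{tc}})=\mathbb{Z}[\mathcal{M}]/R$ inherits a commutative ring structure whose product is the one described by (\ref{prod.oplus}).

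\emph{Part (ii).} Theorem \ref{thma} gives the factorization $\mathcal{T}=\rho\circ\gamma$ as maps $\mathbb{Z}[\mathcal{M}]\to\mathbb{Z}[x,y]$, where $\gamma$ is the surjection of the previous paragraph and $\rho$ is the isomorphism (\ref{k0.tc.mat}). First, $\gamma$ is a ring homomorphism: it is induced by the identity-on-objects morphism $\Gamma$, so $\gamma([M])=[M]$ for every matroid $M$, whence $\gamma([M]\cdot[N])=\gamma([M\oplus N])=[M\oplus N]=\gamma([M])\cdot\gamma([N])$ and $\gamma([U_{0,0}])=[U_{0,0}]$, the unit of $K_0(\mathbf{Mat}^{\mathrm{tc}})$; it is moreover surjective, since by Theorem \ref{k0.result} the target is generated by the classes of matroids. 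Second, $\mathcal{T}$ is a ring homomorphism: it is additive by construction, $\mathcal{T}([U_{0,0}])=T(U_{0,0};x,y)=1$, and $\mathcal{T}([M]\cdot[N])=T(M\oplus N;x,y)=T(M;x,y)\,T(N;x,y)=\mathcal{T}([M])\,\mathcal{T}([N])$ by the classical multiplicativity of the Tutte polynomial under direct sum (see \cite{Br71}). Finally, $\rho$ is a ring homomorphism because $\gamma$ is a surjective ring homomorphism and $\rho\circ\gamma=\mathcal{T}$ is one: for $u,v\in K_0(\mathbf{Mat}^{\mathrm{tc}})$, choose $\tilde u,\tilde v\in\mathbb{Z}[\mathcal{M}]$ with $\gamma(\tilde u)=u$ and $\gamma(\tilde v)=v$, and compute $\rho(uv)=\rho(\gamma(\tilde u\tilde v))=\mathcal{T}(\tilde u\tilde v)=\mathcal{T}(\tilde u)\,\mathcal{T}(\tilde v)=\rho(u)\,\rho(v)$, together with $\rho([U_{0,0}])=\mathcal{T}([U_{0,0}])=1$. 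Since $\rho$ is already an isomorphism of abelian groups, it is now a ring isomorphism, and $\gamma$, $\mathcal{T}$ are ring homomorphisms as claimed.

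The only obstacle of substance is the closure of the class $\mathcal{S}^{\mathrm{tc}}$ of Tutte coverings under $-\oplus N$ in part (i); everything else is bookkeeping built on Theorem \ref{k0.result} and on the classical identity $T(M\oplus N)=T(M)T(N)$. I anticipate that once the Tutte coverings are unwound to their deletion--contraction description from \S\ref{cov.fam.sec}, this closure is immediate from the displayed identities for $(M\oplus N)\setminus e$ and $(M\oplus N)/e$.
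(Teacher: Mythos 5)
Your proposal is correct and, for part (i), follows essentially the paper's own route: present $K_0(\mathbf{Mat}^{\cong})$ as $\mathbb{Z}[\mathcal{M}]$ with the bilinear product induced by $\oplus$, identify $K_0(\mathbf{Mat}^{\mathrm{tc}})$ with the quotient by the subgroup of covering relations (Remark \ref{cov.mat.morhp}), and reduce everything to showing that subgroup is an ideal. The closure property you isolate --- that $\{C_i\oplus N \to B\oplus N\}$ is again a Tutte covering --- is exactly the paper's Proposition \ref{tutcov.oplus}, which it proves before the theorem by applying the functor $-\oplus N$ to the underlying deletion--contraction tree, using the identities (\ref{direct.sum.del.con}) and the fact that non-degenerate elements stay non-degenerate (and isomorphism components stay isomorphisms) after $\oplus N$; your sketch names the right mechanism but you should actually carry out this tree-level verification rather than leave it as ``immediate.'' Where you genuinely diverge is part (ii): the paper never invokes the classical multiplicativity $T(M\oplus N)=T(M)\,T(N)$; instead it identifies the quotient ring $\mathbb{Z}[\mathcal{M}]/H$ with the polynomial ring $\mathbb{Z}[\varepsilon,\sigma]$ on the isthmus and loop classes and checks directly that $\rho$ matches the two polynomial rings, so that $\mathcal{T}=\rho\circ\gamma$ being a ring map is a consequence (in effect the paper re-derives multiplicativity of the Tutte polynomial). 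You instead import $T(M\oplus N)=T(M)\,T(N)$ from Brylawski to make $\mathcal{T}$ a ring map and then transfer multiplicativity to $\rho$ through the surjection $\gamma$; this is logically sound and shorter, at the cost of relying on an external classical identity that the paper's self-contained argument avoids (and recovers for free).
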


As we shall discuss after the proof of Theorem \ref{thmb}, the ring $K_0(\mathbf{Mat}^{\mathrm{tc}})$
agrees with the \textit{Tutte-Grothendieck ring} $\mathcal{R}_{\mathrm{TG}}$ constructed by 
Brylawski in \cite{Br71}. This ring $\mathcal{R}_{\mathrm{TG}}$ is a free commutative ring
with two generators: one corresponding to an \textit{isthmus} $\varepsilon$ and the other
corresponding to a \textit{loop} $\sigma$. Also, the map
$\gamma: K_0(\mathbf{Mat}^{\cong}) \rightarrow K_0(\mathbf{Mat}^{\mathrm{tc}})$, 
which we can also write as
\[
\gamma: \mathbb{Z}[\mathcal{M}] \longrightarrow \mathcal{R}_{\mathrm{TG}},
\]
is what Brylawski referred to as the \textit{Tutte polynomial} in \cite{Br71}. 
Adopting the terminology used in \cite{GMc}, we shall call the map 
$\gamma: \mathbb{Z}[\mathcal{M}] \rightarrow \mathcal{R}_{\mathrm{TG}}$
the \textit{universal Tutte-Grothendieck invariant} (the sense in which $\gamma$ is
universal shall become clear in the discussion following the proof of Theorem \ref{thmb}).  
It is a consequence of Theorem \ref{thma} that the universal Tutte-Grothendieck invariant 
$\gamma$ lifts (as a morphism of abelian groups) to a map of spectra. More concretely, 
we can rephrase the statement of Theorem \ref{thma} as follows.

 \theoremstyle{plain} \newtheorem{thmc}[thma]{Theorem}

\begin{thmc} \label{thmc}
The map of $K$-theory spectra 
\[
K(\Gamma): K(\mathbf{Mat}^{\cong}) \rightarrow  K(\mathbf{Mat}^{\mathrm{tc}})
\]
induced by the morphism $\Gamma: \mathbf{Mat}^{\cong} \rightarrow \mathbf{Mat}^{\mathrm{tc}}$ 
is a lift of the universal Tutte-Grothendieck invariant 
$\gamma: \mathbb{Z}[\mathcal{M}] \rightarrow \mathcal{R}_{\mathrm{TG}}$
(as a morphism of abelian groups) to the category of spectra. 
\end{thmc}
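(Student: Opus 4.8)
The plan is to deduce Theorem \ref{thmc} directly from Theorem \ref{thma} together with the functoriality of the $K$-theory construction of \cite{BGMMZ}. First I would recall that the assignment sending a category with covering families to its $K$-theory spectrum is itself functorial: a morphism of categories with covering families induces a map of spectra. In particular, the canonical morphism $\Gamma: \mathbf{Mat}^{\cong} \rightarrow \mathbf{Mat}^{\mathrm{tc}}$ of (\ref{first.gamma}) yields a map $K(\Gamma): K(\mathbf{Mat}^{\cong}) \rightarrow K(\mathbf{Mat}^{\mathrm{tc}})$ of $K$-theory spectra. By the way $\pi_0$ of these spectra is computed (Theorem \ref{k0.result}), the induced homomorphism $\pi_0 K(\Gamma)$ is precisely the map $\gamma$ of (\ref{second.gamma}); this is a bookkeeping step that unwinds the definitions of \cite{BGMMZ}, but it is the step that actually converts the $K_0$-level content into the spectrum-level statement, so I would state it carefully.

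Next I would invoke the two identifications supplied by Theorem \ref{thma} and the surrounding discussion: the equality $K_0(\mathbf{Mat}^{\cong}) = \mathbb{Z}[\mathcal{M}]$ from (\ref{k0.iso.mat}), and the canonical isomorphism $\rho: K_0(\mathbf{Mat}^{\mathrm{tc}}) \stackrel{\cong}{\longrightarrow} \mathbb{Z}[x,y]$ from part (i). Under these identifications, part (ii) of Theorem \ref{thma} identifies $\gamma$ with the Tutte polynomial map $\mathcal{T}: \mathbb{Z}[\mathcal{M}] \rightarrow \mathbb{Z}[x,y]$. Combined with the previous paragraph, this shows that on $\pi_0$ the spectrum-level map $K(\Gamma)$ realizes $\mathcal{T}$.

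Then I would match $\mathbb{Z}[x,y]$ with Brylawski's Tutte–Grothendieck ring $\mathcal{R}_{\mathrm{TG}}$. By the results of \cite{Br71} (recalled in the discussion after Theorem \ref{thmb}), $\mathcal{R}_{\mathrm{TG}}$ is the free commutative ring on the class of an isthmus $\varepsilon$ and the class of a loop $\sigma$, and the assignment $\varepsilon \mapsto x$, $\sigma \mapsto y$ gives a ring isomorphism $\mathcal{R}_{\mathrm{TG}} \cong \mathbb{Z}[x,y]$ under which Brylawski's Tutte polynomial homomorphism $\mathbb{Z}[\mathcal{M}] \rightarrow \mathcal{R}_{\mathrm{TG}}$ corresponds to $\mathcal{T}$. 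Composing with the preceding step, $\pi_0 K(\Gamma)$ is, up to these canonical isomorphisms, the universal Tutte–Grothendieck invariant $\gamma: \mathbb{Z}[\mathcal{M}] \rightarrow \mathcal{R}_{\mathrm{TG}}$, which is exactly the assertion that $K(\Gamma)$ is a lift of $\gamma$ to the category of spectra.

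I do not expect any genuine obstacle here: Theorem \ref{thmc} is essentially a reformulation of Theorem \ref{thma}, and the only content not literally contained in the latter is the functoriality of the construction $K(-)$ on morphisms of categories with covering families — part of the output of \cite{BGMMZ} — and its compatibility with the computation of $\pi_0$, i.e.\ the identity $\pi_0 K(\Gamma) = \gamma$. The one point worth handling with some care is this compatibility, since it is what upgrades the $K_0$-statement of Theorem \ref{thma} to the spectrum-level statement of Theorem \ref{thmc}; the substantive inputs (the computation of $K_0(\mathbf{Mat}^{\mathrm{tc}})$ and the identification of $\gamma$ with the Tutte polynomial map) are provided by Theorem \ref{thma} and may be cited directly.
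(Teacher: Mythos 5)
Your proposal is correct and follows essentially the same route as the paper, which presents Theorem \ref{thmc} as a direct consequence of Theorem \ref{thma} (identifying $\pi_0 K(\Gamma)=\gamma$ with the Tutte polynomial map under the identifications $K_0(\mathbf{Mat}^{\cong})=\mathbb{Z}[\mathcal{M}]$ and $\rho$) together with the discussion after Theorem \ref{thmb} matching $\mathbb{Z}[x,y]$ with $\mathcal{R}_{\mathrm{TG}}=\mathbb{Z}[\varepsilon,\sigma]$ and $\mathcal{T}$ with Brylawski's universal invariant. Your explicit attention to the functoriality of $K(-)$ and the compatibility $\pi_0 K(\Gamma)=\gamma$ is exactly the bookkeeping the paper leaves implicit.
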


This article is structured as follows: Sections \S \ref{k.theory.background} and
\S \ref{matroid.theory.background} are meant to provide background; 
in \S \ref{k.theory.background}, we review the key notions and results from 
$K$-theory of categories with covering families, whereas in 
\S \ref{matroid.theory.background} we give a thorough review of several
fundamental concepts and constructions from matroid theory. In particular, 
we give a self-contained discussion of the Tutte polynomial. While there
are other ways of introducing this polynomial (e.g., via the corank-nullity polynomial),
we shall present this invariant using its standard recursive definition  
(see \cite{Br71} and \S 9 of \cite{GMc}).
The heart of this paper is Section \S \ref{cov.fam.sec}. In this section, we construct 
the covering family structures $\mathcal{S}^{\cong}$ and $\mathcal{S}^{\mathrm{tc}}$
on the category $\mathbf{Mat}_{+}$, and present the proofs of our main theorems.
We close Section \S \ref{cov.fam.sec} by briefly discussing the possibility 
of finding a spectrum-level lift for the Tutte polynomial map $\mathcal{T}$
as a ring homomorphism and not just as a map of abelian groups (see Note \ref{klift.rem}).

We remark that some of the proofs in Section \S \ref{cov.fam.sec} 
(e.g., the proofs of Propositions \ref{ind.refine} and \ref{ind.ind}) are similar in structure to some 
of the arguments presented in \cite{Br71}.
Nevertheless, besides facilitating a connection between matroid theory 
and the modern ideas of cut-and-paste $K$-theory, we believe that the 
categorical nature of our constructions make these proofs more
streamlined and structured.  \\

\noindent \textbf{Acknowledgements.} The author is grateful to Gary Gordon 
for bringing to his attention the work by Brylawski on the Tutte polynomial. The author
also thanks the Department of Mathematical Sciences at Lafayette College 
for offering a collegial and supportive environment during the development of this project. 

\section{K-theory of categories with covering families} \label{k.theory.background}

The purpose of this section is to give an overview of the main
ideas from \textit{cut-and-paste K-theory} we shall use throughout this paper. 
We will start by reviewing the notion of \textit{category with covering families}
presented in  
\cite{BGMMZ} (see also Section \S 2 of \cite{CG} for a helpful discussion of this construction).   

\vspace{0.2cm} 

\theoremstyle{definition} \newtheorem{catcov}{Definition}[section]

\begin{catcov} \label{catcov}
Let $\mathcal{C}$ be a small category. 

\begin{itemize}

\item[(a)] A \textit{multi-morphism} is a finite (possibly empty) multi-set of morphisms in $\mathcal{C}$ of the form
\[
\{ f_i: C_i \rightarrow B\}_{i \in I}.
\]
More explicitly, a \textit{multi-morphism} is a finite collection (possibly with repetitions) of morphisms in $\mathcal{C}$
 with a common target $B$. 

\vspace{0.2cm}

\item[(b)] Now, suppose that $\mathcal{C}$ has a distinguished 
\textit{base-point} object with the property that

\vspace{0.15cm}

\begin{center}
$\mathcal{C}(\ast, \ast) = \{ \mathrm{Id}_{\ast} \}$ and 
$\mathcal{C}(C,\ast) = \varnothing$ whenever $C \neq \ast$.
\end{center}

\vspace{0.15cm} 

\noindent A \textit{covering family structure} on $\mathcal{C}$ is a collection $\mathcal{S}$
of multi-morphisms with the following properties: 

\begin{itemize}

\vspace{0.15cm}

\item[(i)] For every finite (possibly empty) set $I$, the
family $\{ \ast \rightarrow \ast\}_{i\in I}$ is in $\mathcal{S}$.  

\vspace{0.15cm}

\item[(ii)] For every object $C \in \mathcal{C}$, the singleton 
$\{ \mathrm{Id}_C: C \rightarrow C\}$ is in $\mathcal{S}$. 

\vspace{0.15cm}

\item[(iii)]  Let $J = \{ 1, \ldots, n\}$ and
suppose  $\{ g_j: B_j \rightarrow C\}_{j \in J}$ 
is a multi-morphism in $\mathcal{S}$. Then, given 
a collection of multi-morphisms in $\mathcal{S}$ of the form
\[
\{ f_{i1}: A_{i1} \rightarrow B_1\}_{i\in I_1} \quad \ldots\ldots \quad \{ f_{in}: A_{in} \rightarrow B_n\}_{i\in I_n},
\]
the collection of compositions
\[
\bigcup_{j\in J}\{ g_j \circ f_{ij}: A_{ij} \rightarrow C\}_{j\in J, \hspace{0.1cm} i \in I_j}
\]
is also a multi-morphism in $\mathcal{S}$.   
\end{itemize}

\vspace{0.15cm}

\noindent The multi-morphisms in $\mathcal{S}$ are called \textit{covering families.}
If $\mathcal{C}$ is a small category, $\ast$ a base-point object of $\mathcal{C}$,  and $\mathcal{S}$ 
a covering family structure on $\mathcal{C}$,  
then the triple $(\mathcal{C}, \mathcal{S}, \ast)$ shall be called a \textit{category with covering families.} 
Moreover, given an object $B\in \mathcal{C}$, we shall often call a multi-morphism 
$\{ f_i: C_i \rightarrow B\}_{i \in I}$ in $\mathcal{S}$ 
a \textit{covering of $B$}. 
\end{itemize}

\end{catcov}

In the above definition, it is useful to view a multi-morphism 
$\{ f_i: C_i \rightarrow B\}_{i \in \{ 1, \ldots, n\}}$ in the covering structure 
$\mathcal{S}$ as a rule for decomposing the object $B$ into 
smaller pieces $C_1$, \ldots, $C_n$. Each such multi-morphism 
gives a different way of decomposing the object $B$.

\vspace{0.2cm}

\theoremstyle{definition} \newtheorem{catcov.cat}[catcov]{Remark}

\begin{catcov.cat} \label{catcov.cat}
We can define a category $\mathbf{CatFam}$ of categories with covering families by declaring a morphism  
\[
(\mathcal{C}_1, \mathcal{S}_1, \ast_1) \rightarrow (\mathcal{C}_2, \mathcal{S}_2, \ast_2)
\]
to be a functor $\mathcal{F}: \mathcal{C}_1 \rightarrow \mathcal{C}_2$ that 
preserves the required structure, i.e., $\mathcal{F}(\ast_1) = \ast_2$ and $\mathcal{F}$
must map covering families in $\mathcal{S}_1$ to covering families in $\mathcal{S}_2$.   
\end{catcov.cat}

\theoremstyle{definition} \newtheorem{base.point}[catcov]{Remark}

\begin{base.point} \label{base.point}
As explained in \cite{BGMMZ}, it might be possible for a category $\mathcal{C}$
to have a family of multi-morphisms satisfying the conditions (ii) and (iii) described above and yet not have a base-point object (in the sense of the previous definition).  
In this case, we can add a \textit{disjoint base-point.} In other words, from $\mathcal{C}$,
we form a new category $\mathcal{C}_+$ by adding an object $\ast$ satisfying 
\begin{center}
$\mathrm{Hom}_{\mathcal{C}_+}(\ast, \ast) = \{ \mathrm{Id}_{\ast} \}$ and 
$\mathrm{Hom}_{\mathcal{C}_+}(C,\ast) = \mathrm{Hom}_{\mathcal{C}_+}(\ast, C) = \varnothing$ for all $C \in \mathcal{C}$.  
\end{center}
\end{base.point}

As mentioned in the introduction, the $K$-theory of categories with covering families developed in 
\cite{BGMMZ} is a generalization of Zakharevich's $K$-theory of 
assemblers \cite{Zak17}. As also explained in the introduction, 
this kind of $K$-theory takes as input 
a category with covering families $\mathcal{C}$, and produces a 
spectrum $K(\mathcal{C})$. We shall give a brief overview of this
construction shortly. 
Before doing so, it is worth 
recalling that the group $\pi_0$ of $K(\mathcal{C})$,
conventionally denoted by $K_0(\mathcal{C})$, records the 
different ways we can decompose objects in $\mathcal{C}$
via the multi-morphisms in the covering family structure $\mathcal{S}$.
More concretely, we have the following result, presented as Proposition 3.8 in 
\cite{BGMMZ}. The proof of this result is analogous to that 
of Theorem 2.13 in \cite{Zak17}.  

\vspace{0.2cm}

\theoremstyle{plain} \newtheorem{k0.result}[catcov]{Theorem} 

\begin{k0.result} \label{k0.result}
If $(\mathcal{C}, \mathcal{S}, \ast)$ is a category with covering families, then the group 
$K_0(\mathcal{C})$ is the free abelian group 
$\mathbb{Z}[\mathrm{Ob}(\mathcal{C})]$ modulo the relations 
$[A] = \sum_{j\in J} [A_j]$ for any covering family   
 $\{ A_j \rightarrow A \}_{j \in J}$ in $\mathcal{S}$.  
\end{k0.result}

\vspace{0.2cm}

\theoremstyle{definition} \newtheorem{k0.result.remark}[catcov]{Remark} 

\begin{k0.result.remark} \label{k0.result.remark}
Note that, for any category with covering families 
$(\mathcal{C}, \mathcal{S}, \ast)$, condition (i)
in the definition of covering family structure (part (b) of Definition \ref{catcov})
forces the class $[\ast]$ to be the 
identity element in the group $K_0(\mathcal{C})$. 
\end{k0.result.remark}

The $K$-theory
construction for a category with covering families
presented in \cite{BGMMZ}
relies on the notions introduced in
the next definition and Definition \ref{cov.pointed} below. 

\theoremstyle{definition} \newtheorem{cat.of.cov}[catcov]{Definition} 

\begin{cat.of.cov} \label{cat.of.cov}
Given a category with covering families $(\mathcal{C}, \mathcal{S}, \ast)$, we define 
its \textit{category of covers} $W(\mathcal{C})$ to be the category whose 
objects are finite multi-sets of objects $\{ A_i \}_{i \in I}$ in $\mathcal{C}$, and a morphism
\[
\{ B_j \}_{j\in J} \longrightarrow \{ A_i \}_{i \in I}
\]
between two objects $\{ B_j \}_{j\in J}$ and $ \{ A_i \}_{i \in I}$ consists of the following data:

\vspace{0.2cm}

\begin{itemize}

\item[$\cdot$] A set function $f: J \rightarrow I$.

\vspace{0.2cm}

\item[$\cdot$] For each $i \in I$, a covering family 
$\{ g_{ij}: B_j \rightarrow A_i \}_{j \in f^{-1}(i)}$ belonging to $\mathcal{S}$.   
\end{itemize}

We compose two morphisms in $W(\mathcal{C})$ by first composing the underlying set functions
and then composing covering families using condition (iii) of part (b) of Definition \ref{catcov}. 

\end{cat.of.cov}

As explained in \cite{BGMMZ}, 
for any category with covering families 
$(\mathcal{C}, \mathcal{S}, \ast)$, 
its category of covers $W(\mathcal{C})$
has a natural base-point object, corresponding to $I = \varnothing$. 
Moreover, as proven in \cite{BGMMZ}, the construction described in Definition \ref{cat.of.cov} above
defines a functor 
\[
W(-): \mathbf{CatFam} \rightarrow \mathbf{Cat}_{\mathrm{pt}}
\]
from the category of categories with covering families to the category of categories with base-points. 

The other ingredient for our desired $K$-theory construction is given in Definition \ref{cov.pointed} below.
Before stating this definition, we need one technical preliminary: Given a pointed set $X$ with base-point $\ast$, we can view 
$X$ as a pointed category by taking the set of objects to be $X$ itself and by defining a
morphism set $\mathrm{Hom}(a,b)$ to be a one-point set if $a = b$ or $a = \ast$. Otherwise, $\mathrm{Hom}(a,b)$ is empty. 

\theoremstyle{definition} \newtheorem{cov.pointed}[catcov]{Definition} 

\begin{cov.pointed} \label{cov.pointed}
Consider a category with covering families $(\mathcal{C}, \mathcal{S}, \ast)$. For simplicity, 
we shall denote this category by $\mathcal{C}$.  
Then, if $X$ is a pointed set,  $X\wedge \mathcal{C}$ is the category with
covering families
whose set of objects is given by 
\[
\mathrm{ob}(X\wedge \mathcal{C}) = \big(\mathrm{ob}\hspace{0.02cm}X\times \mathrm{ob}\hspace{0.05cm}\mathcal{C}\big) / \big(\mathrm{ob}\hspace{0.02cm}X\vee \mathrm{ob}\hspace{0.05cm}\mathcal{C}\big)
\]
and whose morphisms are induced by those in $X \times \mathcal{C}$. 
The base-point object is the wedge-point of $\mathrm{ob}(X\wedge \mathcal{C})$. Intuitively, we can
imagine $X\wedge \mathcal{C}$ as the category obtained by 
taking several copies of $\mathcal{C}$ (one copy per element in $X$), and gluing all of them
at their base-points. As explained in \cite{BGMMZ}, the covering family structure 
on $X\wedge \mathcal{C}$ is given by declaring a multi-set $\{A_i \rightarrow B\}_{i\in I}$
to be a covering family if the objects $A_i$ and $B$  are contained in a single copy of $\mathcal{C}$
and $\{A_i \rightarrow B\}_{i\in I}$ is a multi-set belonging to $\mathcal{S}$.
\end{cov.pointed}

\vspace{0.2cm}

\theoremstyle{definition} \newtheorem{K-cons}[catcov]{Note}

\begin{K-cons} \label{K-cons}
\textbf{(The $K$-theory of a category with covering families)} Consider a category with covering families 
$(\mathcal{C}, \mathcal{S}, \ast)$, and let $S^1_{\bullet}$ denote the simplicial circle. As explained in 
Definition 2.17 of \cite{BGMMZ}, the assignment
\[
X \mapsto |N_{\bullet}W(X\wedge \mathcal{C})|
\]
defines a functor from pointed sets to pointed spaces. In fact,
this functor is a $\Gamma$-space (see \cite{Seg74}). Then, as defined in \cite{BGMMZ}, 
the \textit{$K$-theory spectrum} $K(\mathcal{C})$ of $(\mathcal{C}, \mathcal{S}, \ast)$
is the symmetric spectrum associated to this $\Gamma$-space. More concretely, 
the $k$-th level of $K(\mathcal{C})$ is the realization of the simplicial set
\[
p \mapsto N_pW(S^k_p\wedge \mathcal{C}),
\]
where $S^k_{\bullet} = (S^1_{\bullet})^{\wedge k}$ is the simplicial $k$-sphere. 
The details of the construction of the structure maps of this spectrum can be found in 
Definition 2.12 of \cite{Zak17}. It is worth pointing out that the construction from \cite{Zak17} is formulated in the specific context of assemblers. However,
the procedure presented in \cite{Zak17} carries over without difficulties to the more general
case of categories with covering families. 
\end{K-cons}

Besides the preliminaries we have already discussed in this section, we shall also use the following notion in our proof
of Theorem \ref{thma}. This definition is inspired by the
notion of \textit{indecomposable object} from \cite{Br71}.  

\vspace{0.2cm}

\theoremstyle{definition} \newtheorem{ind}[catcov]{Definition}

\begin{ind} \label{ind}
Let $(\mathcal{C}, \mathcal{S}, \ast)$ be a category with covering families. We shall say 
that an object $B \in \mathcal{C}$ is \textit{indecomposable} if 
the only coverings of $B$ are singletons of the form 
$\{ C \stackrel{\cong}{\longrightarrow} B\}$, i.e., 
the only covering families in $\mathcal{S}$ with target $B$ are singletons 
with a single isomorphism mapping to $B$. 
\end{ind} 

\section{Matroid theory essentials}    \label{matroid.theory.background}

\subsection{Basic definitions} 
In this section, we will
collect the main definitions and facts
from matroid theory that we will need for the constructions 
we will discuss later in this paper. There are several equivalent (or, in the language  
of matroid theory, \textit{cryptomorphic}) ways
of defining a matroid. In this paper, we shall mainly use the following
definition, which is perhaps the most standard way of defining a matroid.   

\theoremstyle{definition} \newtheorem{matroid}{Definition}[section]

\begin{matroid} \label{matroid}
A \textit{matroid} $M$ is a tuple $(E, \mathcal{I})$ consisting 
of the following data: (1) A finite set $E$, and (2) a collection
$\mathcal{I}$ of subsets of $E$ satisfying the following axioms: 

\begin{itemize}

\vspace{0.15cm}

\item[(I1)] The empty set $\varnothing$ is in $\mathcal{I}$.

\vspace{0.15cm}

\item[(I2)] If $I \in \mathcal{I}$ and $J \subseteq I$, then $J \in \mathcal{I}$. 

\vspace{0.15cm}

\item[(I3)] \textit{Augmentation axiom.} If $I, J \in \mathcal{I}$ and 
$|J| < |I|$, then there exists an element $x \in I - J$ such that the set
$J\cup \{x\}$  also belongs to $\mathcal{I}$. 

\end{itemize}

The set $E$ is called the \textit{ground set} of the matroid $M$, 
and the subsets of the collection $\mathcal{I}$ are called the 
\textit{independent sets} of $M$. 
\end{matroid}

The following is a list of matroid-theoretic notions we will need for the rest of this paper.

\theoremstyle{definition} \newtheorem{list.mat}[matroid]{Essential definitions}

\begin{list.mat} \label{list.mat}

Fix a matroid $M = (E, \mathcal{I})$. 

\begin{itemize}

\vspace{0.15cm}

\item[(a)] A subset $B$ of $E$ is a \textit{basis} of $M$ if it is a maximal independent set. 
In other words, $B$ is a basis if $B \in \mathcal{I}$ and there
is no other independent set containing $B$. It is a standard fact of
matroid theory that any two bases must have the
same cardinality. 

\vspace{0.15cm}

\item[(b)] Any subset $X$ which does not belong to $\mathcal{I}$ is called
a \textit{dependent set.} In particular, minimal dependent sets are called 
\textit{circuits}. 

\vspace{0.15cm}

\item[(c)] An element $x \in E$ is said to be a \textit{loop} if the singleton 
$\{x\}$ is a circuit. Moreover, two distinct elements $x, y \in E$ are said 
to be \textit{parallel} if the set $\{x,y\}$ is a circuit.

\end{itemize}

\end{list.mat}

As mentioned earlier, there are several equivalent ways of
defining a matroid. For example, one can define a matroid 
by simply specifying its collection of bases $\mathcal{B}$. 
This way of describing matroids is convenient for 
defining the following matroid operation.  

\theoremstyle{definition} \newtheorem{dual.mat}[matroid]{Definition}

\begin{dual.mat} \label{dual.mat}
Suppose $M = (E, \mathcal{B})$ is a matroid, where $\mathcal{B}$
is its collection of bases. The \textit{dual of $M$} is the matroid 
$M^* = (E, \mathcal{B}^*)$ whose collection of bases is 
$\mathcal{B}^* = \{ E- B \hspace{0.1cm}| \hspace{0.1cm} B \in \mathcal{B} \}$.  

\end{dual.mat}

It is a standard fact that $M^*$ is also a matroid. 
At this point, it is convenient to 
introduce a few more basic notions from matroid theory. 

\theoremstyle{definition} \newtheorem{list.mat.2}[matroid]{Essential definitions (continued)}

\begin{list.mat.2} \label{list.mat.2}

Fix a matroid $M = (E, \mathcal{I})$, and let 
$\mathcal{B}$ be its collection of bases.  

\begin{itemize}

\vspace{0.15cm}

\item[(d)] An element $e \in E$ is an \textit{isthmus} of $M$ if $e$ is contained
in every basis $B \in \mathcal{B}$. Equivalently, $e$ is an isthmus 
of $M$ if and only if $e$ is a loop of $M^*$. 

\vspace{0.15cm}

\item[(e)] Two elements $e, f \in E$ are said to be \textit{coparallel}
if $\{ e, f \}$ is a circuit in $M^*$, i.e., $e$ and $f$ are parallel in $M^*$.  

\end{itemize}

\end{list.mat.2}

Elements that are neither isthmuses nor loops shall be important in 
many of our later arguments. For this reason, it is convenient to have a
special name for such elements. 

\theoremstyle{definition} \newtheorem{non.deg}[matroid]{Definition}

\begin{non.deg} \label{non.deg}
We shall say that an element $e$ of a matroid $M$ is \textit{non-degenerate} 
if it is neither an isthmus nor a loop. 
\end{non.deg}

\subsection{Matroid operations} Taking duals (Definition \ref{dual.mat}) is
one operation we can perform on matroids. 
The next definition gives two more examples of operations which 
produce new matroids from old ones. 

\theoremstyle{definition} \newtheorem{del.con}[matroid]{Definition}

\begin{del.con} \label{del.con}
Fix a matroid $M = (E, \mathcal{I})$, where $\mathcal{I}$ is its 
collection of independent sets. 

\begin{itemize}

\vspace{0.15cm}

\item[(i)] \textit{Deletion.} Suppose $e \in E$ is not an isthmus of $M$. We define
$M \backslash e$ to be the matroid with ground set $E - \{ e\}$ and whose
collection of independent sets is defined as  
\[
\mathcal{I}_{M \backslash e} := \big\{ I \subseteq E - \{e\} \hspace{0.1cm} | \hspace{0.1cm} I \in \mathcal{I} \big\}.
\]
We say that $M \backslash e$ is the matroid
obtained from $M$ by \textit{deleting} $e$. 

\vspace{0.15cm}

\item[(ii)] \textit{Contraction.} On the other hand, if $e$ is not a loop of $M$, we define 
$M / e$ to be the matroid with ground set $E - \{ e\}$ and whose
collection of independent sets is defined as  
\[
\mathcal{I}_{M / e} := \big\{ I \subseteq E - \{e\} \hspace{0.1cm} | \hspace{0.1cm} I\cup\{e\} \in \mathcal{I} \big\}.
\]
In this case, we say that $M / e$ is the matroid 
obtained from $M$ by \textit{contracting} $e$. 

\end{itemize}

\end{del.con}

\vspace{0.2cm}

The next proposition describes how the 
deletion and contraction operations interact with duality. For this statement,
we need to introduce the following terminology: 
We say that two
matroids $M_1 = (E_1, \mathcal{I}_1)$ and $M_2 = (E_2, \mathcal{I}_2)$
are \textit{isomorphic}, written as $M_1 \cong M_2$, if there is a
bijection $f: E_1 \rightarrow E_2$ with the property that $I \in \mathcal{I}_1$
if and only if $f(I) \in \mathcal{I}_2$.  

\theoremstyle{plain} \newtheorem{dual.del.con}[matroid]{Proposition}

\begin{dual.del.con} \label{dual.del.con}

Fix a matroid $M = (E, \mathcal{I})$. If $e$ is 
a non-degenerate element 
of $M$ (in the sense of Definition \ref{non.deg}), 
then we have matroid isomorphisms
of the form 
\[
(M/ e)^* \cong M^* \backslash e \qquad (M \backslash e)^* \cong M^* / e.
\]
The function of sets underlying both of these isomorphisms is the identity map on $E - \{e\}$. 

\end{dual.del.con}

In other words, deletion and contraction are dual operations. 
It also turns out that these two operations commute with each other
and with themselves, 
as the following proposition indicates.

\theoremstyle{plain} \newtheorem{del.con.comm}[matroid]{Proposition}

\begin{del.con.comm} \label{del.con.comm}
Fix a matroid $M = (E, \mathcal{I})$, and fix two elements $e$ and $f$ of $M$. 

\begin{itemize}

\vspace{0.2cm}

\item[(i)] If $e$ and $f$ are not coparallel and are not isthmuses of $M$, then 
\[
\big(M \backslash e\big) \backslash f = \big(M \backslash f \big) \backslash e.
\]

\vspace{0.2cm}

\item[(ii)] If $e$ and $f$ are not parallel and are not loops of $M$, then 
\[
\big(M / e \big)/ f = \big(M / f\big) / e.
\]

\vspace{0.2cm}

\item[(iii)] If $e$ is not an isthmus and $f$ is not a loop of $M$, then 
\[
\big(M \backslash e \big)/ f = \big(M /f \big) \backslash e.
\] 

\end{itemize}

\end{del.con.comm}

\vspace{0.2cm}

According to this proposition, it does not matter in which order we perform deletions and contractions, as long
as the element we wish to delete (resp. contract) is not an isthmus (resp. a loop). We shall typically drop
parentheses when denoting matroids obtained by multiple deletions and contractions. So, for example,
we will write $\big(M \backslash e \big)/ f$ simply as $M \backslash e / f$.
Proofs for both Propositions \ref{dual.del.con} and \ref{del.con.comm} can be found 
in standard matroid theory references, such as \cite{GMc} and \cite{Ox}.  

Matroids obtained from other matroids via an iteration of deletions and contractions receive the following name in the literature.   

\theoremstyle{definition} \newtheorem{minor}[matroid]{Definition}

\begin{minor} \label{minor}
Fix a matroid $M = (E, \mathcal{I})$. Any matroid obtained from $M$ via a sequence
of deletions and/or contractions is called a \textit{minor of $M$}. 
\end{minor}

\theoremstyle{definition} \newtheorem{minor.ex}[matroid]{Example}

\begin{minor.ex} \label{minor.ex}
Any finite graph $G$ induces naturally a matroid: If $E_G$ is the set
of edges of $G$, then we can define a matroid 
$M_G = (E_G, \mathcal{I}_G)$ by taking $\mathcal{I}_G$ 
to be all subsets of edges that do not form any closed edge-paths.
A matroid induced by a graph in this way is called a
\textit{graphical matroid}. 
For example, take the following graph $G$: 

\begin{equation} \label{pic.graph} 
\quad
\end{equation}
\vspace{-1.4cm}
\[
\begin{tikzpicture}
      \tikzset{enclosed/.style={draw, circle, inner sep=0pt, minimum size=.2cm, fill=blue}}

      \node[enclosed] (E) at (0.75,2.35) {};
	   \node[enclosed] (A) at (3.75,2.35) {};
	    \node[enclosed] (B) at (3.75,0.75) {};
      \node[enclosed] (L) at (0.75,0.75) {};
      
      \draw (E) -- (L) node[midway, left] (edge1)  {$a$};
      \draw (L) -- (B) node[midway, below] (edge2) {$b$};
      \draw (B) -- (A) node[midway, right] (edge3) {$c$};
      \draw (A) -- (E) node[midway, above] (edge4) {$d$};
      \draw (L) -- (A) node[midway, above] (edge5) {$e$};
\end{tikzpicture}
\]
Then, the bases of the matroid $M_G$ induced by this graph are 
\[
\{a, b, c\} \quad \{a, b, d\} \quad \{ a,c,d\} \quad \{b, c, d\} \quad \{a, b, e\} \quad \{a, c, e\} \quad \{ e,b,d\} \quad \{e, c, d\}.
\]
On the other hand, the circuits of $M_G$ are $\{ a, d, e\}$, $\{b, c, e\}$, and $\{a,b,c,d\}$. 
Any subset of $E_G$ not containing any of these three subsets is independent. 
Performing
deletion and contraction on a graphical matroid corresponds to 
deleting and contracting edges in the underlying graph. So, for example, 
if $G_1$ and $G_2$ are the graphs  
obtained by contracting $e$ and deleting $a$ respectively 
in $G$ (see figure (\ref{pic.subgraphs})), then we have $M_{G_1} = M_G/e$ and $M_{G_2} = M_G \backslash a.$

\begin{equation} \label{pic.subgraphs}
\quad
\end{equation}
\vspace{-1.4cm}
\[
\begin{tikzpicture}
      \tikzset{enclosed/.style={draw, circle, inner sep=0pt, minimum size=.2cm, fill=blue}}
      
      \node (A0) at (2, 2.8) {\underline{$G_1$}};
       \node (B0) at (6.5,2.8) {\underline{$G_2$}};
         \node (C0) at (11,2.8) {\underline{$G_3$}};
      \node[enclosed] (E) at (0,1.55) {};
	   \node[enclosed] (A) at (2,1.55) {};
	    \node[enclosed] (B) at (4,1.55) {};    
	    \node[enclosed] (E1) at (5.5,1.8) {};
	   \node[enclosed] (A1) at (7.5,1.8) {};
	    \node[enclosed] (B1) at (7.5,0.8) {};
      \node[enclosed] (L1) at (5.5,0.8) {};
         \node[enclosed] (E2) at (9,1.55) {};
	   \node[enclosed] (A2) at (11,1.55) {};
	    \node[enclosed] (B2) at (13,1.55) {}; 
	    
       \draw [-] (E) to [out=30,in=150] node[above]  {$d$} (A);
       \draw [-] (E) to [out=-30,in= -150] node[below]  {$a$} (A);
       \draw [-] (A) to [out=30,in=150] node[above]  {$c$} (B);
       \draw [-] (A) to [out=-30,in= -150] node[below]  {$b$} (B);
       \draw (L1) -- (B1) node[midway, below] (edge2) {$b$};
      \draw (B1) -- (A1) node[midway, right] (edge3) {$c$};
      \draw (A1) -- (E1) node[midway, above] (edge4) {$d$};
      \draw (L1) -- (A1) node[midway, above] (edge5) {$e$};
      \draw [-] (E2) to node[above]  {$d$} (A2);
       \draw [-] (A2) to [out=30,in=150] node[above]  {$c$} (B2);
       \draw [-] (A2) to [out=-30,in= -150] node[below]  {$b$} (B2);
\end{tikzpicture}
\]
The graph $G_3$ on the far-right is obtained by contracting
$e$ and deleting $a$ in $G$. For this graph, we have 
$M_{G_3} = M_{G}/e\backslash a$
(equivalently, $M_{G_3} = M_G \backslash a /e$).  
\end{minor.ex}

All the operations we have discussed so far 
require only one single matroid as input. 
We will close this subsection by giving an example
of an operation that takes multiple matroids
as input in order  
to generate a new matroid.

\theoremstyle{definition} \newtheorem{direct.sum}[matroid]{Definition}

\begin{direct.sum} \label{direct.sum}
Fix two matroids $M_1 = (E_1, \mathcal{I}_1)$ 
and $M_2 = (E_2, \mathcal{I}_2)$. The 
\textit{direct sum of $M_1$ and $M_2$,} denoted by 
$M_1 \oplus M_2$, is the matroid whose ground set $E$
and collection of independent sets $\mathcal{I}$ are defined
respectively as follows: 

\begin{itemize}

\item[$\cdot$] $E = E_1 \sqcup E_2$ (i.e., $E$ is the disjoint union of the ground sets $E_1$ and $E_2$). 

\vspace{0.15cm}

\item[$\cdot$] $\mathcal{I} = \big\{ I_1\sqcup I_2 \hspace{0.1cm} | \hspace{0.1cm} I_1 \in \mathcal{I}_1, \hspace{0.1cm}  I_2 \in \mathcal{I}_2\big\}$.

\end{itemize}
 
\end{direct.sum}

\vspace{0.2cm}

Direct sums of more than two matroids are defined inductively. Also,
it is evident that $M_1\oplus M_2 \cong M_2 \oplus M_1$. 
If $e$ is not an isthmus of $M_1$ and $f$ is not a loop of $M_2$, 
then it is straightforward to verify the following identities:  
\begin{equation} \label{direct.sum.del.con}
(M_1 \oplus M_2)\backslash e = (M_1 \backslash e) \oplus M_2 \qquad \qquad (M_1 \oplus M_2)/ f = M_1 \oplus (M_2 / f).
\end{equation}

\theoremstyle{definition} \newtheorem{ist.loop}[matroid]{Remark}

\begin{ist.loop} \label{ist.loop}
Let $E$ be a set consisting of a single element $e$. Then, there are only two matroids we can define 
on $E = \{ e \}$: One by declaring $e$ to be an isthmus, and the other one by declaring $e$ to be a loop. 
From now on, we will denote these two matroids by $\varepsilon$ and $\sigma$, i.e.,
\begin{equation} \label{ist.loo.not}
\varepsilon = (E, \{ e\}) \qquad \sigma = (E, \varnothing). 
\end{equation}
We shall typically denote the $n$-fold direct sum of $\varepsilon$ (resp. $\sigma$) 
with itself by $\varepsilon^n$ (resp. $\sigma^n$). 
If $M$ is a matroid with no non-degenerate elements, then it is evident that
\[
M \cong \varepsilon^m \oplus \sigma^n,
\]
where $m$ and $n$ are the number of isthmuses and loops in $M$ respectively. 
\end{ist.loop}

\vspace{0.2cm} 

\subsection{Categories of matroids}

Multiple definitions of a \textit{category of matroids} are already 
available in the literature (see for example \cite{HP} and \cite{Ig}). For the 
purposes of this paper, we define this category as follows. 

\theoremstyle{definition} \newtheorem{catmat}[matroid]{Definition}

\begin{catmat} \label{catmat}

Let $\mathbf{Mat}$ denote the category consisting of the following data:   

\begin{itemize}

\vspace{0.15cm}

\item[$\cdot$] Objects of $\mathbf{Mat}$ are matroids 
$M = (E, \mathcal{I})$ such that $E \subset \{ 1, 2, \ldots \}$.

\vspace{0.15cm}

 \item[$\cdot$] A morphism $M \rightarrow N$ from $M = (E_1, \mathcal{I}_1)$
 to $N = (E_2, \mathcal{I}_2)$ is an injective
 set function $f:E_1 \rightarrow E_2$ with the property
that $f(I) \in \mathcal{I}_2$ for any $I \in \mathcal{I}_1$. 

\end{itemize}

\vspace{0.15cm}

If $M' = (E', \mathcal{I}')$ is a minor of 
$M = (E, \mathcal{I})$ (in which case, $E'$ is a subset of $E$), 
then the morphism $M' \rightarrow M$ induced by the obvious
inclusion of sets $E' \hookrightarrow E$ shall be called 
\textit{the standard inclusion of $M'$ into $M$.}  

\end{catmat}

\vspace{0.2cm}

The condition $E \subset \{ 1, 2, \ldots \}$ imposed on objects
guarantees that $\mathbf{Mat}$ is a small category. Recall that
any category with covering families (in the sense of Definition
\ref{catcov}) is required to have a distinguished base-point
object. A natural choice for such a base-point in $\mathbf{Mat}$
would seem to be \textit{the empty matroid}, i.e., the matroid
on the empty set $\varnothing$ whose unique independent set
is $\varnothing$ itself. By abuse of notation, we shall denote
the empty matroid simply by $\varnothing$. However, 
such a choice of base-point would be undesirable because,
as we shall indicate in Definition \ref{tutte.pol},
the Tutte polynomial of $\varnothing$ is $T(\varnothing; x, y) = 1$.
On the other hand, according to Remark \ref{k0.result.remark},
the matroid we take as the base-point should correspond to the
idenity element in the $K_0$ group.
For this reason, we are required to add a base-point to 
$\mathbf{Mat}$, as indicated in the next definition. 

\theoremstyle{definition} \newtheorem{catmat.base}[matroid]{Definition}

\begin{catmat.base} \label{catmat.base}
We define $\mathbf{Mat}_{+}$ to be the category obtained by
adding a disjoint base-point object $\ast$ to $\mathbf{Mat}$, in the sense of
Remark \ref{base.point}.
Furthermore, we can extend the 
direct sum operation to $\mathbf{Mat}_{+}$ by declaring 
\[
M\oplus \ast = \ast \oplus M = \ast \hspace{0.1cm}\text{ for all objects $M$ in }\mathbf{Mat}_{+}.
\] 
\end{catmat.base}

\vspace{0.2cm}

Finally, for some of the arguments we will present in the
next section, it is convenient to work with 
\textit{multi-sets of matroids.} By a \textit{multi-set of matroids}
we shall mean a collection of matroids $\{ M_i \}_{i \in \Lambda}$
indexed by some finite non-empty set $\Lambda$. Note that
it is possible to have $M_i = M_j$ even if $i$ and $j$ are distinct indices in $\Lambda$.
We shall also extend the notion of matroid isomorphism to these more
general kinds of objects. 

\theoremstyle{definition} \newtheorem{iso.multi}[matroid]{Definition}

\begin{iso.multi} \label{iso.multi}
An \textit{isomorphism of multi-sets of matroids} 
$\{ M_i\}_{i \in \Lambda} \stackrel{\cong}{\longrightarrow} \{ N_j\}_{j \in \Omega}$
consists of the following data: 

\begin{itemize}

\vspace{0.15cm}

\item[(i)] A bijection $g: \Lambda \stackrel{\cong}{\longrightarrow} \Omega$, and

\vspace{0.15cm}

\item[(ii)] for each $i \in \Lambda$, an isomorphism of matroids
$f_i: M_i   \stackrel{\cong}{\longrightarrow} N_{g(i)}$.

\end{itemize}

\end{iso.multi}

\vspace{0.2cm}

\subsection{The Tutte polynomial} 

Our next goal is to discuss the main matroid invariant we shall focus on 
throughout the rest of this paper: \textit{the Tutte polynomial}. 
As mentioned in the introduction, 
there are multiple ways of
defining this invariant. However, in this paper, we shall opt for the following 
recursive definition of the Tutte polynomial, since this is the definition that
motivates the categorical constructions we will develop in the next section 
(see also Definition 9.2 in \cite{GMc}).

\theoremstyle{definition} \newtheorem{tutte.pol}[matroid]{Definition}

\begin{tutte.pol} \label{tutte.pol}
The \textit{Tutte polynomial} $T(M; x, y)$ of a matroid $M$ is defined 
recursively as follows: 

\begin{enumerate} 

\item $T(M; x, y) = T(M \backslash e; x, y) + T(M / e; x, y)$ if $e$ is a non-degenerate element of $M$.

\vspace{0.2cm}

\item $T(M; x, y) = x\cdot T(M / e; x, y)$ if $e$ is an isthmus. 

\vspace{0.2cm}

\item $T(M; x, y) = y\cdot T(M \backslash e; x, y)$  if $e$ is a loop.

\vspace{0.2cm}

\item $T(M; x , y) = 1$ if $M = \varnothing$.  

\end{enumerate}

\end{tutte.pol}

\vspace{0.2cm}

\theoremstyle{definition} \newtheorem{tutte.pol.ex0}[matroid]{Example}

\begin{tutte.pol.ex0} \label{tutte.pol.ex0}
Consider a matroid of the form $\varepsilon^m \oplus \sigma^n$, i.e., 
a matroid with $m$ isthmuses, $n$ loops, and no non-degenerate elements (see Remark \ref{ist.loop}). 
It follows from rules (2)-(4) in the previous definition that 
\begin{equation} \label{basic.tutte}
T(\varepsilon^m \oplus \sigma^n; x ,y) = x^my^n. 
\end{equation}
\end{tutte.pol.ex0}

\vspace{0.2cm} 

At this point, 
besides the identity given in (\ref{basic.tutte}),
it is also helpful to give an example of a computation of a non-trivial Tutte polynomial
in complete detail.  
This example will not only help the reader process the previous definition,
but it will also motivate the construction of the covering family structure $\mathcal{S}^{\mathrm{tc}}$
on the category of matroids $\mathbf{Mat}_{+}$ that we will use
in the proof of Theorem \ref{thma}.  

\theoremstyle{definition} \newtheorem{tutte.pol.ex}[matroid]{Example}

\begin{tutte.pol.ex} \label{tutte.pol.ex}
Consider again the graph $G$ from Example \ref{minor.ex}:
\[
\begin{tikzpicture}
      \tikzset{enclosed/.style={draw, circle, inner sep=0pt, minimum size=.2cm, fill=blue}}

      \node[enclosed] (E) at (0.75,2.35) {};
	   \node[enclosed] (A) at (3.75,2.35) {};
	    \node[enclosed] (B) at (3.75,0.75) {};
      \node[enclosed] (L) at (0.75,0.75) {};
      
      \draw (E) -- (L) node[midway, left] (edge1)  {$a$};
      \draw (L) -- (B) node[midway, below] (edge2) {$b$};
      \draw (B) -- (A) node[midway, right] (edge3) {$c$};
      \draw (A) -- (E) node[midway, above] (edge4) {$d$};
      \draw (L) -- (A) node[midway, above] (edge5) {$e$};
\end{tikzpicture}
\]
We will compute the Tutte polynomial of the matroid induced by $G$
(which, for simplicity, we will denote by $M$) 
using only the first rule listed in Definition \ref{tutte.pol}
and the identity given in (\ref{basic.tutte}). 
To perform this computation, we will reduce 
 $M$ into matroids of the form $\varepsilon^m \oplus \sigma^n$
 by repeatedly applying deletions and contractions. 
 We illustrate this process in the following diagram:
 
 \begin{equation} \label{first.tree}
\quad 
\end{equation}
\[
\begin{tikzpicture}
      \tikzset{enclosed/.style={draw, circle, inner sep=0pt, minimum size=.2cm, fill=blue}}

      \node (M) at (0,0) {$M$};
      \node (Me0) at (-4,-1.5) {$M/e$};
         \node (Ma0) at (-6,-3) {$M/e/a$};
         \node (Mb0) at (-7, -4.5) {$M/e/a/b$};
         \node (Mb1) at (-5, -4.5) {$M/e/a\backslash b$};
      \node (Ma1) at (-2,-3) {$M/e\backslash a$};
      \node (Mc0) at (-3,-4.5) {$M/e\backslash a/c$};
       \node (Mc1) at (-1,-4.5) {$M/e\backslash a\backslash c$};
      \node (Me1) at (5,-1.5) {$M\backslash e$};
     \node (Md0) at (3,-3) {$M\backslash e/d$};
     \node (Ma2) at (2, -4.5) {$M\backslash e/d/a$};
     \node (Mc2) at (1, -6)  {$M\backslash e/d/a/c$};
     \node (Mc3) at (3, -6)  {\quad$M\backslash e/d/a\backslash c$}; 
     \node (Ma3) at (4, -4.5) {$M\backslash e/d\backslash a$};
      \node (Md1) at (7,-3) {$M\backslash e\backslash d$};
	
	\draw (M) -- (Me0);
	\draw (M) -- (Me1);
	\draw (Me0) -- (Ma0);
	\draw (Me0) -- (Ma1);
	\draw (Me1) -- (Md0);
	\draw (Me1) -- (Md1);
	\draw (Mb0) -- (Ma0);
	\draw (Mb1) -- (Ma0);
	\draw (Mc0) -- (Ma1);
	\draw (Mc1) -- (Ma1);
	\draw (Ma2) -- (Md0);
	\draw (Ma3) -- (Md0);
	\draw (Mc2) -- (Ma2);
	\draw (Mc3) -- (Ma2);
	
\end{tikzpicture}
\]
Going from top-to-bottom, this tree is obtained by 
taking a matroid located at a `node' and then 
contracting and deleting a non-degenerate element
from that matroid. For example, at the very top,
we contract and delete the edge $e$ in $M$. Then, at 
the node $M/e$, we obtain the next two minors
by contracting and deleting the edge $a$. 
By the first rule listed in Definition \ref{tutte.pol},
the Tutte polynomial of a matroid located at a node
is the sum of the Tutte polynomials of the two matroids 
corresponding to the two `branches' sprouting
downwards from the node. For example, for the matroid
$M/e$ located at the left-hand node in the second upper-most
level, we have
\[
T(M/e; x, y) = T(M/e/a; x, y) + T(M/e \backslash a; x, y). 
\]
As the reader
can verify, all 
the `leaves' of the tree (\ref{first.tree}) are labeled by minors which are 
isomorphic to matroids of the form
$\varepsilon^m \oplus \sigma^n$.  
More precisely, going from left-to-right, the matroids at the 
leaves have the following isomorphism types: 
\[
\sigma^2 \qquad\quad \varepsilon\oplus\sigma \qquad\quad \varepsilon\oplus\sigma \qquad\quad \varepsilon^2 \qquad\quad \sigma \qquad\quad \varepsilon  \qquad\quad \varepsilon^2 \qquad\quad \varepsilon^3
\]
Therefore,
by an iterative application of rule (1) from Definition
\ref{tutte.pol}, we have
\[
T(M; x, y) = \hspace{0.1cm} y^2 \hspace{0.1cm} + \hspace{0.1cm} xy \hspace{0.1cm} + \hspace{0.1cm} xy \hspace{0.1cm} + \hspace{0.1cm} x^2 \hspace{0.1cm} + 
\hspace{0.1cm} y \hspace{0.1cm} + \hspace{0.1cm}  x \hspace{0.1cm} + \hspace{0.1cm} x^2 \hspace{0.1cm} + \hspace{0.1cm} x^3.
\]
\end{tutte.pol.ex}

\section{Covering family structures on the category of matroids} \label{cov.fam.sec}

\subsection{Outline of the main proof} \label{outline} 
We will start this section by introducing one of the two covering family structures on 
$\mathbf{Mat}_{+}$ that we discussed in the introduction. 

\theoremstyle{definition} \newtheorem{iso.cov.str}{Definition}[section]

\begin{iso.cov.str} \label{iso.cov.str}
Let $\mathcal{S}^{\cong}$ be the covering family structure on 
$\mathbf{Mat}_{+}$ defined as follows: 

\begin{itemize}

\vspace{0.2cm}

\item[(i)] Every finite (possibly empty) family $\{\ast \rightarrow \ast\}_{i\in I}$
 is in $\mathcal{S}^{\cong}$. Recall that $\ast$
 is the base-point object in $\mathbf{Mat}_{+}$.  
 
 \vspace{0.2cm}

\item[(ii)] If $f: M \stackrel{\cong}{\longrightarrow} N$ 
is an arbitrary isomorphism in $\mathbf{Mat}_{+}$,
then the singleton 
\[
\{ f: M \stackrel{\cong}{\longrightarrow} N \}
\]
belongs to $\mathcal{S}^{\cong}$.   

\end{itemize}

\vspace{0.2cm}

As mentioned in the introduction, we shall denote the category with 
covering families $(\mathbf{Mat}_{+}, \mathcal{S}^{\cong}, \ast)$
by $\mathbf{Mat}^{\cong}$. 

\end{iso.cov.str}

\vspace{0.2cm} 

As also discussed in the introduction, Theorem \ref{k0.result}
immediately implies that 
\[
K_0(\mathbf{Mat}^{\cong}) = \mathbb{Z}[\mathcal{M}],
\]
where $\mathcal{M}$ represents the set of isomorphism classes
of matroids. Most of the rest of this section shall be devoted
to the proof of Theorem \ref{thma}. 
We will break down this proof as follows: 

\begin{itemize}

\vspace{0.2cm}

\item[\textit{Step 1:}] Construct the covering family structure $\mathcal{S}^{\mathrm{tc}}$ on $\mathbf{Mat}_{+}$. 
As mentioned in the introduction, the covering families in $\mathcal{S}^{\mathrm{tc}}$ shall be referred to as \textit{Tutte coverings}. 

\vspace{0.2cm}

\item[\textit{Step 2:}] Verify that there is a canonical isomorphism 
$\rho: K_0(\mathbf{Mat}^{\mathrm{tc}}) \stackrel{\cong}{\longrightarrow} \mathbb{Z}[x,y]$,
where $\mathbf{Mat}^{\mathrm{tc}}$ denotes the triple 
$(\mathbf{Mat}_{+},\mathcal{S}^{\mathrm{tc}}, \ast)$. 

\vspace{0.2cm}

\item[\textit{Step 3:}] Show that the composition of $\gamma: K_0(\mathbf{Mat}^{\cong}) \longrightarrow K_0(\mathbf{Mat}^{\mathrm{tc}})$
and $\rho: K_0(\mathbf{Mat}^{\mathrm{tc}}) \stackrel{\cong}{\longrightarrow} \mathbb{Z}[x,y]$
maps any generator $[M] \in K_0(\mathbf{Mat}^{\cong})$ to its Tutte polynomial $T(M;x,y)$.
\end{itemize}

\vspace{0.2cm}

\subsection{The family of Tutte coverings}

In this subsection, we will construct the covering family structure 
$\mathcal{S}^{\mathrm{tc}}$ indicated in Step 1 above. Constructing
this structure, as well as establishing its key properties, 
is the main step for proving Theorem \ref{thma}.
Our overarching strategy for constructing the 
covering families in $\mathcal{S}^{\mathrm{tc}}$ is to
produce diagrams in $\mathbf{Mat}_{+}$ involving
deletions and contractions whose shape resembles that
of the tree-shaped diagram displayed in (\ref{first.tree}) in
Example \ref{tutte.pol.ex}.
In Definition \ref{treecat} below, we will introduce the class of diagrams 
in $\mathbf{Mat}_{+}$ we will use for this purpose.   
Before doing so, we need to discuss a few preliminaries.  

\theoremstyle{definition}     \newtheorem{tree}[iso.cov.str]{Conventions}

\begin{tree} \label{tree}
Consider a rooted binary tree $T$, as shown in the left-hand figure below. 

\begin{equation} \label{bin.tree.cat}
\quad
\end{equation}
\vspace{-1.4cm}
\[
\begin{tikzpicture}
      \tikzset{enclosed/.style={draw, circle, inner sep=0pt, minimum size=.1cm, fill=black}}
      
      \node (T) at (-4,0.6) {\underline{\hspace{0.1cm}$T$}};
      \node[enclosed, label={right, xshift=.05cm: $v_0$}] (v) at (-4, 0) {};
      \node[enclosed, label={right, xshift=.05cm: $w_1$}] (w1) at (-5.5, -1.5) {};
       \node[enclosed, label={right, xshift=.05cm: $w_2$}] (w2) at (-2.5, -1.5) {};
       \node[enclosed, label={right, xshift=.05cm: $w_5$}] (w5) at (-2.5,-3) {};
       \node[enclosed, label={left, xshift=.05cm: $w_3$}] (w3) at (-7,-3) {};
        \node[enclosed, label={left, xshift=.05cm: $w_4$}] (w4) at (-4,-3) {};
      
      \node (CT) at (4,0.6) {\underline{$\mathcal{C}_T$}};    
      \node (v1) at (4, 0) {$v_0$};
      \node (w11) at (2.5, -1.5) {$w_1$};
       \node (w22) at (5.5, -1.5) {$w_2$};
       \node  (w55) at (5.5,-3) {$w_5$};
       \node (w33) at (1,-3) {$w_3$};
       \node (w44) at (4,-3) {$w_4$};

          \draw (v) -- (w1);
          \draw (v) -- (w2);
          \draw (w1) -- (w3);
           \draw (w1) -- (w4);
           \draw (w2) -- (w5);
           
           \draw (w11) edge[->] (v1);
           \draw (w22) edge[->] (v1);
            \draw (w55) edge[->] (w22);
             \draw (w33) edge[->] (w11);
              \draw (w44) edge[->] (w11);
      \end{tikzpicture}
\]
As usually done in the literature, we shall call 
 the top-most vertex of a rooted binary tree the \textit{root} of the tree. 
 Also, given a vertex $v$ of a rooted binary tree, any 
 vertex sitting below $v$ and which can be connected to $v$ 
 via a sequence of edges shall be called a \textit{descendant} of $v$. For example,
 in the tree $T$ shown in (\ref{bin.tree.cat}), $w_3$ is a descendant of the vertex $v_0$. 
 Furthermore, the immediate descendants of a vertex 
 (i.e., those which are connected to it via a single edge) 
 are called the \textit{children} of the vertex. For example, 
 referring again to the tree $T$ in (\ref{bin.tree.cat}), 
 $w_1$ and $w_2$ are the children of the vertex $v_0$.
 On the other hand, $w_5$ is the only child of $w_2$.  
 Finally, vertices
 without any descendants are called \textit{leaves}, and any 
 vertex which is not a leaf is an
 \textit{internal vertex.} 
 Throughout the rest of this article, we shall only consider rooted binary trees 
 with finitely many vertices. Also, we shall typically denote the 
 root of a rooted binary tree $T$ by $\bullet_T$.    

\end{tree}

\vspace{0.2cm} 

The following definition describes the shape of the diagrams we shall consider
when defining Tutte coverings.  

\theoremstyle{definition}     \newtheorem{treecat}[iso.cov.str]{Definition} 

\begin{treecat} \label{treecat}
Fix a rooted binary tree $T$ and let $\mathrm{Vert}(T)$ be its
set of vertices. \textit{The category induced by $T$}, denoted
by $\mathcal{C}_T$, is the category determined by the partial order $\leq_T$
on $\mathrm{Vert}(T)$ defined by $w \leq_T v$ if and only if $w$ is a 
descendant of $v$ or $w = v$.
Referring again to the tree $T$ in (\ref{bin.tree.cat}),
the right-hand figure in (\ref{bin.tree.cat}) illustrates 
the category $\mathcal{C}_T$ induced by $T$. 
\end{treecat}

\vspace{0.2cm}

The following kinds of $T$-shaped diagrams in $\mathbf{Mat}_{+}$,
where $T$ is an arbitrary rooted binary tree,
shall be the basic building blocks we will use to construct
the desired covering family structure $\mathcal{S}^{\mathrm{tc}}$ on $\mathbf{Mat}_{+}$.  

\theoremstyle{definition}     \newtheorem{treediag}[iso.cov.str]{Definition}

\begin{treediag} \label{treediag}
Consider a rooted binary tree $T$. 
An \textit{elementary deletion-contraction tree of shape $T$}
is a functor of the form 
$\mathcal{F}: \mathcal{C}_T \rightarrow \mathbf{Mat}_{+}$ 
with the following properties:  

\begin{itemize}

\vspace{0.2cm}

\item[(i)] $\mathcal{F}(\bullet_T) \neq \ast$, where $\ast$ is the base-point object of $\mathbf{Mat}_{+}$. 
 
\vspace{0.2cm}

\item[(ii)] If $v$ is an internal vertex  of $T$ with only one child $w$, then 
we must have that $\mathcal{F}(w) = \mathcal{F}(v)$ and 
$\mathcal{F}(w \rightarrow v) = \mathrm{Id}_{\mathcal{F}(v)}$.

\vspace{0.2cm}

\item[(iii)] If $v$ is an internal vertex of $T$ with two children $w_1$ and $w_2$,  
then there is an element $e$ 
of the matroid $N := \mathcal{F}(v)$ for which the following holds: 

\begin{itemize}

\vspace{0.15cm}

\item[$\cdot$] $e$ is a non-degenerate element of $N$ (see Definition \ref{non.deg}).

\vspace{0.15cm}

\item[$\cdot$] $\mathcal{F}$ maps the subdiagram 
\[
\begin{tikzpicture}
      \tikzset{enclosed/.style={draw, circle, inner sep=0pt, minimum size=.1cm, fill=black}}
      
      \node (v) at (0,0) {$v$};
       \node (w1) at (-1,-1.5) {$w_1$};
      \node (w2) at (1,-1.5) {$w_2$};
      
      \draw (w1) edge[->] (v);
      \draw (w2) edge[->] (v);
      \end{tikzpicture}
\]  
of $\mathcal{C}_T$ to one of the following two diagrams: 
\begin{equation} \label{splitting}
\quad
\end{equation}
\vspace{-1cm}
\[
\begin{tikzpicture}
      \tikzset{enclosed/.style={draw, circle, inner sep=0pt, minimum size=.1cm, fill=black}}
      
      \node (v0) at (-2,0) {$N$};
       \node (w1) at (-3,-1.5) {$N/e$};
      \node (w2) at (-1,-1.5) {$N\backslash e$};
      
      \draw (w1) edge[->] (v0);
      \draw (w2) edge[->] (v0);
      
      \node (v1) at (2,0) {$N$};
       \node (w11) at (1,-1.5) {$N\backslash e$};
      \node (w22) at (3,-1.5) {$N/e$};
      
        \draw (w11) edge[->] (v1);
      \draw (w22) edge[->] (v1);
      \end{tikzpicture}
\]  
In both of these diagrams, the diagonal maps are the 
standard inclusions from $N/e$ and $N\backslash e$ into $N$
(see Definition \ref{catmat}). 
A diagram of the form displayed in (\ref{splitting}) shall be called
a \textit{splitting of} $\mathcal{F}$.   

\end{itemize} 

\vspace{0.2cm} 
 
\end{itemize}

If $w \in T$ is a descendant of a vertex $v\in T$, then
it follows from the previous three conditions that
the matroid $\mathcal{F}(w)$ is a minor of $\mathcal{F}(v)$ and that $\mathcal{F}$
maps the unique morphism $w \rightarrow v$ in $\mathcal{C}_T$ to the standard inclusion 
$\mathcal{F}(w) \hookrightarrow \mathcal{F}(v)$.
\end{treediag}

We can generalize the previous definition as follows.

\theoremstyle{definition} \newtheorem{treediag.cont}[iso.cov.str]{Definition}

\begin{treediag.cont} \label{treediag.cont}

We say that $\mathcal{F}: \mathcal{C}_T \rightarrow \mathbf{Mat}_{+}$ is a 
\textit{deletion-contraction tree of shape $T$} if it is naturally isomorphic to an elementary 
deletion-contraction tree $\mathcal{G}$ of shape $T$, i.e., there is a natural transformation
$\eta: \mathcal{F} \Rightarrow \mathcal{G}$ whose components are all isomorphisms. 
Moreover, if $\mathcal{F}: \mathcal{C}_T \rightarrow \mathbf{Mat}_{+}$ is a deletion-contraction tree of shape $T$ with
$\mathcal{F}(\bullet_T) = M$, we shall sometimes say that $\mathcal{F}$
is \textit{rooted at }$M$.  
\end{treediag.cont}

\theoremstyle{definition} \newtheorem{examp.tree.diag2}[iso.cov.str]{Example} 

\begin{examp.tree.diag2} \label{examp.tree.diag2}
If $T$ is a single edge (i.e., a root $\bullet_T$ with a single child $w$), then a deletion-contraction tree
of shape $T$ is just an isomorphism $N \stackrel{\cong}{\longrightarrow} M$, 
where $M = \mathcal{F}(\bullet_T)$ and $N = \mathcal{F}(w)$.  
\end{examp.tree.diag2}

\theoremstyle{definition} \newtheorem{examp.tree.diag}[iso.cov.str]{Example} 

\begin{examp.tree.diag} \label{examp.tree.diag}
Consider the following graphs $G$, $G'$, and $G''$: 
\vspace{0.3cm}
 \[
\begin{tikzpicture}
      \tikzset{enclosed/.style={draw, circle, inner sep=0pt, minimum size=.2cm, fill=blue}}
      
      \node (v0) at (-5, 0.6) {\underline{$G$}};
      \node[enclosed] (v1) at (-6,0) {};
      \node[enclosed] (v2) at (-6,-2) {};
      \node[enclosed] (v3) at (-4.5, -1) {};
      \node[enclosed] (v4) at (-3.5, -1) {};
      
      \draw [-] (v2) to [out = 120, in=240] node[left,midway] {$b$} (v1);
      \draw [-] (v2) to [out = 60, in=300] node[right,midway] {$a$} (v1);
      \draw (v1) -- (v3) node[midway, above] {$c$};
      \draw (v2) -- (v3) node[midway, below] {$d$};
      \draw (v3) -- (v4) node[midway, above] {$e$};
      
       \node (v00) at (-0.2, 0.6) {\underline{$G'$}};
      \node[enclosed] (v11) at (-1,-1) {};
      \node[enclosed] (v33) at (0.5,-1) {};
      \node[enclosed] (v44) at (1.5, -1) {};
      
      \draw (v11) to [out = 135, in= 225, looseness = 25] node[left,midway] {$b$} (v11);
      \draw (v11) to [out = 30, in= 150] node[above,midway] {$c$} (v33);
      \draw (v11) to [out = -30, in= -150] node[below,midway] {$d$} (v33);
      \draw (v33) -- (v44) node[midway, above] {$e$};
       
       \node (w00) at (4.2, 0.6) {\underline{$G''$}}; 
      \node[enclosed] (w11) at (4,-1) {};
      \node[enclosed] (w33) at (5.5,-1) {};
      \node[enclosed] (w44) at (4, -2.3) {};
      
      \draw (w11) to [out = 135, in= 225, looseness = 25] node[left,midway] {$b'$} (w11);
      \draw (w11) to [out = 30, in= 150] node[above,midway] {$c'$} (w33);
      \draw (w11) to [out = -30, in= -150] node[below,midway] {$d'$} (w33);
      \draw (w11) -- (w44) node[midway, left] {$e'$};
      
\end{tikzpicture}
\]
We shall denote the graphical matroids induced by these three graphs 
by $M$, $M'$, and $N$ respectively. 
Note that $G'$ is obtained by contracting the edge $a$ in $G$.
Therefore, $M' = M/a$. Also, there is an evident isomorphism 
$M' \cong N$ between the matroids induced by $G'$ and $G''$.  
Now, consider the following rooted binary trees $T_1$ and $T_2$: 
\begin{equation} \label{examp.trees}
\quad
\end{equation} 
\vspace{-1cm}
\[
\begin{tikzpicture}
      \tikzset{enclosed/.style={draw, circle, inner sep=0pt, minimum size=.1cm, fill=black}}
      
      \node (x) at (-3,1) {\underline{\hspace{0.1cm}$T_1$}}; 
      \node[enclosed, label={right, xshift=.05cm: $\bullet_{T_1}$}] (v0) at (-3, 0) {};
      \node[enclosed, label={right, xshift=.05cm: $w_1$}] (w1) at (-4.5, -1.5) {};
      \node[enclosed, label={right, xshift=.05cm: $w_2$}] (w2) at (-1.5, -1.5) {};
      \node[enclosed, label={left, xshift=.05cm: $w_3$}] (w3) at (-5.5,-3) {};
      \node[enclosed, label={right, xshift=.05cm: $w_4$}] (w4) at (-3.5,-3) {};
      \node[enclosed, label={right, xshift=.05cm: $w_5$}] (w5) at (-1.5, -3) {};
        
      \draw (v0) -- (w1);
      \draw (v0) -- (w2);
      \draw (w1) -- (w3);
      \draw (w1) -- (w4);  
      \draw (w2) -- (w5); 
      
      \node (y) at (3,1) {\underline{\hspace{0.1cm}$T_2$}};
      \node[enclosed, label={right, xshift=.05cm: $\bullet_{T_2}$}] (v00) at (3, 0) {};
      \node[enclosed, label={left, xshift=.05cm: $w_6$}] (w6) at (2, -1.5) {};
      \node[enclosed, label={right, xshift=.05cm: $w_7$}] (w7) at (4, -1.5) {};

      \draw (v00) -- (w6);
      \draw (v00) -- (w7);

\end{tikzpicture}
\]
Figure (\ref{examp.funct}) below displays examples of deletion-contraction trees $\mathcal{F}_1: \mathcal{C}_{T_1} \rightarrow \mathbf{Mat}_{+}$
and $\mathcal{F}_2: \mathcal{C}_{T_2} \rightarrow \mathbf{Mat}_{+}$ rooted at the matroids $M$ and $N$ respectively:   
\newpage
\begin{equation} \label{examp.funct}
\quad   
\end{equation} 
\vspace{-1cm}
\[
\begin{tikzpicture}
      \tikzset{enclosed/.style={draw, circle, inner sep=0pt, minimum size=.1cm, fill=black}}
      
      \node (x) at (-3,1) {\underline{\hspace{0.1cm}$\mathcal{F}_1$}}; 
      \node (v0) at (-3, 0) {$M$};
      \node (w1) at (-4.5, -1.5) {$M \backslash a$};
      \node (w2) at (-1.5, -1.5) {$M/a$};
      \node (w3) at (-5.5,-3) {$M\backslash a/b$};
      \node (w4) at (-3.5,-3) {$M\backslash a \backslash b$};
      \node (w5) at (-1.5, -3) {$N$}; 
      
       \draw (w1) edge[->] (v0);
       \draw (w2) edge[->] (v0);
       \draw (w3) edge[->] (w1);
       \draw (w4) edge[->] (w1);
       \draw (w5) edge[->] node[midway, right] {$\cong$} (w2);
       
      \node (y) at (3,1) {\underline{\hspace{0.1cm}$\mathcal{F}_2$}};
      \node (v00) at (3, 0) {$N$};
      \node (w6) at (2, -1.5) {$N/ c'$};
      \node (w7) at (4, -1.5) {$N \backslash c'$};
      
      \draw (w6) edge[->] (v00);
      \draw (w7) edge[->] (v00);
       
\end{tikzpicture}
\]
Note that every splitting in $\mathcal{F}_1$ and $\mathcal{F}_2$ is obtained by deleting and contracting a
non-degenerate element of the matroid located at the corresponding node.    
The vertical morphism in the left-hand diagram represents an isomorphism between
$N$ and $M/a = M'$. All other morphisms in (\ref{examp.funct}) are standard inclusions
(in the sense of Definition \ref{catmat}).  
\end{examp.tree.diag}

The reader may have noticed that it is possible to merge the diagrams displayed in 
(\ref{examp.funct}) to produce a larger deletion-contraction tree. Namely, since 
the matroid $N$
is both a leaf in $\mathcal{F}_1$ and the root
of $\mathcal{F}_2$, it is possible to merge the two diagrams at $N$ to produce
the following deletion-contraction tree: 
\begin{equation} \label{examp.funct.merge}
\quad 
\end{equation} 
\vspace{-1cm}
\[
\begin{tikzpicture}
      \tikzset{enclosed/.style={draw, circle, inner sep=0pt, minimum size=.1cm, fill=black}}
      
      \node (x) at (-3,1) {\underline{\hspace{0.1cm}$\mathcal{F}$}}; 
      \node (v0) at (-3, 0) {$M$};
      \node (w1) at (-4.5, -1.5) {$M \backslash a$};
      \node (w2) at (-1.5, -1.5) {$M/a$};
      \node (w3) at (-5.5,-3) {$M\backslash a/b$};
      \node (w4) at (-3.5,-3) {$M\backslash a \backslash b$};
      \node (w5) at (-1.5, -3) {$N$}; 
      
       \draw (w1) edge[->] (v0);
       \draw (w2) edge[->] (v0);
       \draw (w3) edge[->] (w1);
       \draw (w4) edge[->] (w1);
       \draw (w5) edge[->] node[midway, right] {$\cong$} (w2);
       
      \node (w6) at (-2.5, -4.5) {$N/ c'$};
      \node (w7) at (-0.5, -4.5) {$N \backslash c'$};
      
      \draw (w6) edge[->] (w5);
      \draw (w7) edge[->] (w5);
\end{tikzpicture}
\]       
The above diagram represents a functor of the form $\mathcal{F}: \mathcal{C}_T \rightarrow \mathbf{Mat}_{+}$, where
$\mathcal{C}_T$ is the category induced by the rooted binary tree $T$ obtained
by gluing the trees $T_1$ and $T_2$ from (\ref{examp.trees}) at the vertices $w_5$ and  $\bullet_{T_2}$. 
We formalize this construction in the next definition. 

\theoremstyle{definition} \newtheorem{treediag.merge}[iso.cov.str]{Definition}

\begin{treediag.merge} \label{treediag.merge}
Consider two deletion-contraction trees 
$\mathcal{F}_1: \mathcal{C}_{T_1} \rightarrow \mathbf{Mat}_{+}$
and $\mathcal{F}_2: \mathcal{C}_{T_2} \rightarrow \mathbf{Mat}_{+}$ 
such that $\mathcal{F}_1(v) = \mathcal{F}_2(\bullet_{T_2})$
for some leaf $v$ of $T_1$.  If $T$ is the rooted binary
tree obtained by gluing $T_1$ and $T_2$ at the points 
$v$ and $\bullet_{T_2}$, then we can define a functor 
$\mathcal{F}: \mathcal{C}_T \rightarrow \mathbf{Mat}_{+}$ 
as follows:  

\begin{itemize}

\vspace{0.2cm}

\item[(i)] Without loss of generality, identify
the trees $T_1$ and $T_2$ with the subtrees of 
$T$ obtained by taking the images of the obvious
inclusions $T_1 \hookrightarrow T$ and 
$T_2 \hookrightarrow T$ respectively. Similarly, 
identify the categories 
$\mathcal{C}_{T_1}$ and $\mathcal{C}_{T_2}$ 
with the obvious subcategories of $\mathcal{C}_T$. 
Then, with these identifications, we define the functor 
$\mathcal{F}$ on $\mathcal{C}_{T_1}$ and $\mathcal{C}_{T_2}$ as  

\[
\mathcal{F}|_{\mathcal{C}_{T_1}} := \mathcal{F}_1 \quad\text{ and }\quad
\mathcal{F}|_{\mathcal{C}_{T_2}} := \mathcal{F}_2. 
 \]
 
\item[(ii)] Next, consider two vertices $w_1 \in T_1$ and $w_2 \in T_2$. 
If $w_2$ is a descendant of $w_1$ in $T$, we define 
$\mathcal{F}(w_2 \rightarrow w_1)$ to be the morphism obtained by taking the composition 

\[
\mathcal{F}_1(v \rightarrow w_1) \circ \mathcal{F}_2(w_2 \rightarrow \bullet_{T_2}).
\] 
  
\end{itemize}

\vspace{0.2cm}

It is straightforward to verify that  
$\mathcal{F}: \mathcal{C}_T \rightarrow \mathbf{Mat}_{+}$
is a deletion-contraction tree.
In this case, we say that $\mathcal{F}$ was
obtained by \textit{attaching $\mathcal{F}_2$ to $\mathcal{F}_1$ at the leaf $v$}.  
\end{treediag.merge}

\theoremstyle{definition} \newtheorem{treediag.merge.rem}[iso.cov.str]{Remark}

\begin{treediag.merge.rem} \label{treediag.merge.rem}

Evidently, it is possible to extend the previous definition to the case 
when we have more than two trees. More precisely, let
$\mathcal{F}_0$, $\mathcal{F}_1$, \ldots, $\mathcal{F}_p$
be deletion-contraction trees of shape $T_0$, $T_1$, \ldots, $T_p$
respectively. Moreover, suppose that there are $p$ distinct leaves
$v_1$, \ldots, $v_p$ in $T_0$ for which we have 
$\mathcal{F}_0(v_j) = \mathcal{F}_j(\bullet_{T_j})$ for 
$j = 1, \ldots, p$. Then, by repeating the construction 
introduced in the previous definition $p$ times, we can 
produce a new deletion-contraction tree 
$\mathcal{F}: \mathcal{C}_T \rightarrow \mathbf{Mat}_{+}$
by attaching  $\mathcal{F}_1$, \ldots, $\mathcal{F}_p$
to $\mathcal{F}_0$ at the leaves 
$v_1$, \ldots, $v_p$ respectively. 
\end{treediag.merge.rem}

\vspace{0.2cm}

We need one more ingredient before we can define the 
covering family structure $\mathcal{S}^{\mathrm{tc}}$ on $\mathbf{Mat}_{+}$.  
For this next definition, we shall adopt the following notation: If $v$ is a vertex in a rooted binary tree $T$ with 
root $\bullet_T$, then we denote the unique morphism $v \rightarrow \bullet_{T}$ in $\mathcal{C}_T$ by $i_{v}$.

\vspace{0.2cm}

\theoremstyle{definition} \newtheorem{leaf.root}[iso.cov.str]{Definition}

\begin{leaf.root} \label{leaf.root}
Consider a rooted binary tree $T$ and let $v_1, \ldots, v_p$
be the leaves of $T$. Given a deletion-contraction tree
$\mathcal{F}: \mathcal{C}_T \rightarrow \mathbf{Mat}_{+}$, the multi-morphism 
\[
\big\{ \mathcal{F}(i_{v_j}): \mathcal{F}(v_j) \rightarrow \mathcal{F}(\bullet_T) \big\}_{j = 1,\ldots, p} 
\]
shall be called \textit{the collection of leaf-to-root morphisms of} $\mathcal{F}$.     
\end{leaf.root}

\vspace{0.2cm}

We are now ready to define our second covering family structure on $\mathbf{Mat}_{+}$. 

\theoremstyle{definition} \newtheorem{tc.cov.str}[iso.cov.str]{Definition}

\begin{tc.cov.str} \label{tc.cov.str}
Let $\mathcal{S}^{\mathrm{tc}}$ be the collection of
multi-morphisms in $\mathbf{Mat}_{+}$ defined
as follows: 

\begin{itemize}

\vspace{0.15cm}

\item[(i)] Every finite (possibly empty) family $\{\ast \rightarrow \ast\}_{i\in I}$
 is in $\mathcal{S}^{\mathrm{tc}}$. Once again, $\ast$ represents
 the base-point object in $\mathbf{Mat}_{+}$.

 \vspace{0.15cm}

\item[(ii)] Given a matroid $M \neq \ast$, a multi-morphism of the form
$\{ f_j: N_j \rightarrow M\}_{j = 1,\ldots, p} $  
belongs to $\mathcal{S}^{\mathrm{tc}}$ if and only if
it can be realized as the collection of leaf-to-root morphisms of
some deletion-contraction tree.
In other words, $\{ f_j: N_j \rightarrow M\}_{j = 1,\ldots, p} $ 
is in $\mathcal{S}^{\mathrm{tc}}$ if and only if 
there exists a rooted binary tree $T$ with leaves $v_1, \ldots, v_p$ 
and a deletion-contraction tree $\mathcal{F}: \mathcal{C}_T \rightarrow \mathbf{Mat}_{+}$ 
rooted at $M$ such that $f_j = \mathcal{F}(i_{v_j})$ for each $j = 1, \ldots, p$.
Recall that $i_{v_j}$ is the unique morphism 
$v_j \rightarrow \bullet_T$ in $\mathcal{C}_T$. 
\end{itemize}

 \vspace{0.15cm}

As mentioned earlier, a multi-morphism
belonging to $\mathcal{S}^{\mathrm{tc}}$ 
of the form $\{ f_j: N_j \rightarrow M\}_{j = 1,\ldots, p}$
shall be called a \textit{Tutte covering of $M$}.  
Moreover, as we have indicated already,  
the category with covering families 
$(\mathbf{Mat}_{+}, \hspace{0.05cm} \mathcal{S}^{\mathrm{tc}}, \ast)$
will be denoted by $\mathbf{Mat}^{\mathrm{tc}}$.  
\end{tc.cov.str}

\vspace{0.2cm}

Our next step is to verify
that $\mathcal{S}^{\mathrm{tc}}$ is indeed a covering
family structure.

\theoremstyle{plain} \newtheorem{cov.mat.prop}[iso.cov.str]{Proposition} 

\begin{cov.mat.prop} \label{cov.mat.prop}
The collection $\mathcal{S}^{\mathrm{tc}}$ is a covering family structure 
on the category $\mathbf{Mat}_{+}$. 
\end{cov.mat.prop}

\begin{proof}
By part (i) of the
previous definition, we have immediately that 
$\mathcal{S}^{\mathrm{tc}}$ satisfies condition (a)
from Definition \ref{catcov}.  
On the other hand, 
as remarked in Example \ref{examp.tree.diag2},
any isomorphism $N \stackrel{\cong}{\longrightarrow} M$
is a deletion-contraction tree. In particular,
any identity map $\mathrm{Id}_M: M \rightarrow M$ defines
a deletion-contraction tree,
and it follows that 
every singleton of the form $\{\mathrm{Id}_M: M \rightarrow M\}$ is 
in $\mathcal{S}^{\mathrm{tc}}$. To show that 
$\mathcal{S}^{\mathrm{tc}}$ also satisfies condition 
(c) from Definition \ref{catcov}, take a collection 
of $p+1$ multi-morphisms 
in $\mathcal{S}^{\mathrm{tc}}$ of the following form: 
\[
\{g_j: M_j \rightarrow M\}_{j \in \{1,\ldots,p\}} \qquad \{f_{i1}: N_{i1} \rightarrow M_1\}_{i \in I_1} \quad \ldots\ldots \quad \{f_{ip}: N_{ip} \rightarrow M_p\}_{i \in I_p}.
\]
By part (ii) of Definition \ref{tc.cov.str}, we can realize each of these multi-morphisms as
the collection of leaf-to-root morphisms of deletion-contraction trees
\[
\mathcal{F}_0 \quad \mathcal{F}_1 \quad \ldots\ldots \quad \mathcal{F}_p
\]
of shape $T_0$, $T_1$, \ldots, $T_p$ respectively. In particular, if 
$v_1$, \ldots, $v_p$ are the leaves of $T_0$, 
we have 
\[
\mathcal{F}_j(\bullet_{T_j}) = M_j = \mathcal{F}_0(v_j)
\]
for each $j = 1, \ldots, p$.  
It is then straightforward to verify that 
the collection of compositions
\begin{equation} \label{comp.output}
\{ g_j \circ f_{ij}: N_{ij} \longrightarrow M \}_{j\in \{1,\ldots,p\}, \hspace{0.1cm} i \in I_j}
\end{equation}
is equal to the collection of leaf-to-root morphisms of the deletion-contraction
tree $\mathcal{F}$ obtained by attaching 
$\mathcal{F}_1$, \ldots. $\mathcal{F}_p$ to $\mathcal{F}_0$ at the 
leaves $v_1$, \ldots, $v_p$
(see Remark \ref{treediag.merge.rem}). 
 Therefore, the collection given in 
(\ref{comp.output}) is also in $\mathcal{S}^{\mathrm{tc}}$,
and we have thus shown that 
 $\mathcal{S}^{\mathrm{tc}}$ satisfies 
 all the requirements for being a 
 covering family structure. 
\end{proof}

\theoremstyle{definition} \newtheorem{cov.mat.morhp}[iso.cov.str]{Remark} 

\begin{cov.mat.morhp} \label{cov.mat.morhp}
Note that any covering family in $\mathbf{Mat}^{\cong}$
is also a covering family in  $\mathbf{Mat}^{\mathrm{tc}}$. It follows that
the identity functor $\mathbf{Mat}_{+} \stackrel{=}{\longrightarrow} \mathbf{Mat}_{+}$
underlies a morphism 
\begin{equation} \label{Gamma.2}
\Gamma: \mathbf{Mat}^{\cong} \longrightarrow \mathbf{Mat}^{\mathrm{tc}}
\end{equation}
of categories with covering families. As indicated in the introduction, 
we shall denote the map 
$K_0(\mathbf{Mat}^{\cong}) \rightarrow K_0(\mathbf{Mat}^{\mathrm{tc}})$
between $K_0$ groups corresponding to the morphism 
$\Gamma$ in (\ref{Gamma.2}) by $\gamma$. 
We point out that this homomorphism $\gamma$ is a quotient map. 
Indeed, as remarked earlier, 
$K_0(\mathbf{Mat}^{\cong})$ is the free abelian group 
$\mathbb{Z}[\mathcal{M}]$ on the set $\mathcal{M}$
of isomorphism classes of matroids. 
On the other hand, by Theorem \ref{k0.result}, the group
$K_0(\mathbf{Mat}^{\mathrm{tc}})$
is the quotient of $\mathbb{Z}[\mathcal{M}]$ modulo the subgroup 
$H < \mathbb{Z}[\mathcal{M}]$
generated by elements of the form 
\[
[M] -\big( [N_1] + \ldots\ldots + [N_p] \big),
\]
where $N_1$, \ldots, $N_p$ are the domains 
of a Tutte covering for $M$. 
It is now evident that 
the homomorphism $\gamma$
agrees with the quotient map 
$\mathbb{Z}[\mathcal{M}] \rightarrow \mathbb{Z}[\mathcal{M}]/H$.  
\end{cov.mat.morhp}

Proposition \ref{cov.mat.prop} completes Step 1 of the proof of Theorem \ref{thma}
(see the outline given at the end of \S \ref{outline}). Now we will address the second
step, i.e., we will describe the isomorphism type 
of the group $K_0(\mathbf{Mat}^{\mathrm{tc}})$.
By the way we defined the covering family structure $\mathcal{S}^{\mathrm{tc}}$, 
it follows that the only indecomposable objects in $\mathbf{Mat}^{\mathrm{tc}}$
(in the sense of Definition \ref{ind}) are matroids which are 
isomorphic to finite direct sums of the form
\begin{equation} \label{ist.loop2}
\varepsilon^m \oplus \sigma^n.
\end{equation}
That is, $M$ is indecomposable in $\mathbf{Mat}^{\mathrm{tc}}$ if
it is the direct sum of finitely many isthmuses and loops. 
In the above direct sum, we may have $m=0$ or $n=0$. 
If $m$ and $n$ are both zero, (\ref{ist.loop2}) becomes the empty matroid $\varnothing$.   
It turns out that any object $M \neq \ast$ in $\mathbf{Mat}^{\mathrm{tc}}$
admits a Tutte covering consisting entirely of indecomposable objects.
We shall prove this fact in the next proposition. 

\theoremstyle{plain} \newtheorem{ind.refine}[iso.cov.str]{Proposition}

\begin{ind.refine} \label{ind.refine}
Any matroid $M \neq \ast$ admits a Tutte covering  
$\{ g_i: N_i \rightarrow M\}_{i \in I}$ where each $N_i$ is indecomposable
in $\mathbf{Mat}^{\mathrm{tc}}$.    
\end{ind.refine}

\begin{proof}
We shall prove this claim by induction on the number of
non-degenerate elements in $M$.\\

\textit{\underline{Base case:}} If $M$ has no non-degenerate elements 
(i.e., if each element of $M$ is either an isthmus or a loop),
then it suffices to take the covering $\{ \mathrm{Id}_M: M \rightarrow M\}$.  \\ 
 
\textit{\underline{Inductive step:}} Now, consider a matroid $M$ 
with exactly $p>0$ non-degenerate elements, and suppose that
the claim is true for all matroids with at most $p-1$ non-degenerate
elements. For the matroid $M$,  
fix a non-degenerate element $e$, and consider the following 
elementary deletion-contraction tree $\mathcal{F}$:
\begin{equation} \label{init.split}
\quad 
\end{equation}
\vspace{-1cm}
\[
\begin{tikzpicture}
      \tikzset{enclosed/.style={draw, circle, inner sep=0pt, minimum size=.1cm, fill=black}}
      
      \node (v) at (3, 0) {$M$};
      \node (w1) at (2, -1.5) {$M/ e$};
      \node (w2) at (4, -1.5) {$M \backslash e$};
      
      \draw (w1) edge[->] (v);
      \draw (w2) edge[->] (v);
       
\end{tikzpicture}
\]
Since the matroids $M/e$ and $M \backslash e$ can only have at most
$p-1$ non-degenerate elements, both of these matroids admit Tutte coverings 
\[
\{ f_{i}: N_{i} \rightarrow M/ e  \}_{i \in I}  \qquad  \{ f'_{j}: N'_{j} \rightarrow M\backslash e \}_{j\in J} 
\]
where each $N_{i}$ and $N'_{j}$ is indecomposable. 
Furthermore, both of these coverings
can be realized as the collections of leaf-to-root morphisms of deletion-contraction trees
$\mathcal{F}_1$ and $\mathcal{F}_2$ respectively. 
Then, by attaching
$\mathcal{F}_1$ and $\mathcal{F}_2$ to the leaves of the tree in (\ref{init.split}), 
we obtain a deletion-contraction tree $\mathcal{F}$ whose collection of 
leaf-to-root morphisms is the following: 
\[
\{ i_1\circ f_{i}: N_{i} \rightarrow M \}_{i \in I}  \hspace{0.1cm} \cup \hspace{0.12cm}  \{ i_2 \circ f'_{j}: N'_{j} \rightarrow M \}_{j \in J}. 
\]
Therefore, the matroid $M$ also admits a Tutte covering where each 
morphism has an indecomposable domain,
which is exactly what we wanted to prove. 
\end{proof}

\vspace{0.2cm}

The previous result motivates the following definition. 

\theoremstyle{definition} \newtheorem{complete.cov}[iso.cov.str]{Definition}

\begin{complete.cov} \label{complete.cov}

A Tutte covering $\Lambda = \{ f_j: N_j \rightarrow M\}_{j \in \{ 1, \ldots, p\}}$ for a matroid 
$M \neq \ast$ shall be called \textit{indecomposable}
if each domain $N_j$ is indecomposable. For such a covering, 
we will denote
the multi-set of domains $N_1, \ldots, N_p$ by $\mathrm{Ind}_M(\Lambda)$.    
\end{complete.cov}

\vspace{0.2cm}

Perhaps the most essential fact we need to prove in order to describe 
the isomorphism type of $K_0(\mathbf{Mat}^{\mathrm{tc}})$
is that, for any matroid $M \neq \ast$, 
the multi-set $\mathrm{Ind}_M(\Lambda)$ is (up to isomorphism) independent of 
the indecomposable Tutte covering $\Lambda$.  
We shall establish this fact in the next proposition.  

\theoremstyle{plain} \newtheorem{ind.ind}[iso.cov.str]{Proposition}

\begin{ind.ind} \label{ind.ind}
Fix a matroid $M \neq \ast$ in $\mathbf{Mat}_{+}$. 
If $\Lambda$ and $\Omega$ are 
indecomposable Tutte coverings of $M$, then 
$\mathrm{Ind}_M(\Lambda)$ 
and $\mathrm{Ind}_M(\Omega)$ are isomorphic
as multi-sets of matroids (see Definition \ref{iso.multi}).
\end{ind.ind}

\begin{proof}
We shall also prove this result by performing induction 
on the number of non-degenerate elements 
in $M$. To make our notation less cumbersome,
we shall drop the subscript in the notation $\mathrm{Ind}_M(\Lambda)$.
Thus, for an indecomposable Tutte covering $\Lambda$ of 
$M$, we shall denote the multi-set of domains in $\Lambda$
simply by $\mathrm{Ind}(\Lambda)$.  \\  

\textit{\underline{Base case:}} Suppose that $M$ does not have any non-degenerate
elements. In other words, $M$ consists only of isthmuses and loops, which means that $M$
must be isomorphic to a direct sum of the form
\[
\varepsilon^m \oplus \sigma^n. 
\]
Then, for any indecomposable covering $\Lambda$ 
of $M$, we must have that $\mathrm{Ind}(\Lambda)$ is
isomorphic to the multi-set $\{\varepsilon^m \oplus \sigma^n\}$
consisting only of the matroid $\varepsilon^m \oplus \sigma^n$.
It follows that the result holds for the base case.    \\   

\textit{\underline{Inductive step:}} Let $p \in \mathbb{Z}_{>0}$ and assume that the result is true
for all matroids with at most $p -1$ non-degenerate elements. Also, fix the following data: 

\begin{itemize}

\vspace{0.15cm}

\item[$\cdot$] A matroid $M$ with exactly $p$ non-degenerate elements. 

\vspace{0.15cm}

\item[$\cdot$] Two indecomposable Tutte coverings 
$\Lambda$ and $\Omega$ of $M$. 

\vspace{0.15cm}

\end{itemize}

Without loss of generality, we can assume
that $\Lambda$ and $\Omega$ are Tutte coverings 
induced by elementary deletion-contraction trees, i.e., 
there are elementary deletion-contraction 
trees $\mathcal{F}$ and $\mathcal{G}$ such that
$\Lambda$ and $\Omega$ are the 
collections of leaf-to-root morphisms of 
$\mathcal{F}$ and $\mathcal{G}$ respectively. 
With this assumption, 
the top two levels of $\mathcal{F}$ and $\mathcal{G}$
are respectively of the form 
\begin{equation} \label{top.levels}
\quad
\end{equation}
\vspace{-1cm}
\[
\begin{tikzpicture}
      \tikzset{enclosed/.style={draw, circle, inner sep=0pt, minimum size=.1cm, fill=black}}
      
      \node (x) at (-2,1cm) {$\underline{\mathcal{F}_0}$};
      \node (v0) at (-2,0) {$M$};
       \node (w1) at (-3,-1.5) {$M/e$};
      \node (w2) at (-1,-1.5) {$M\backslash e$};
      
      \draw (w1) edge[->] (v0);
      \draw (w2) edge[->] (v0);
      
       \node (y) at (2,1cm) {$\underline{\mathcal{G}_0}$};
      \node (v1) at (2,0) {$M$};
       \node (w11) at (1,-1.5) {$M / f$};
      \node (w22) at (3,-1.5) {$M\backslash f$};
      
        \draw (w11) edge[->] (v1);
      \draw (w22) edge[->] (v1);
      \end{tikzpicture}
\]    
for some 
non-degenerate elements $e$ and $f$ of $M$. 
If $e = f$, then our induction hypothesis implies that
$\mathrm{Ind}(\Lambda) \cong \mathrm{Ind}(\Omega)$. 
Thus, we shall assume from now on that $e \neq f$.  
As indicated in the figure above, we are denoting the trees in (\ref{top.levels}) 
by $\mathcal{F}_0$ and $\mathcal{G}_0$ respectively. 
The trees $\mathcal{F}$ and $\mathcal{G}$ are then
obtained by attaching  elementary deletion-contraction trees
$\mathcal{F}_1$, $\mathcal{F}_2$, $\mathcal{G}_1$, $\mathcal{G}_2$ 
rooted at $M/e$, $M \backslash e$, $M/f$, $M \backslash f$ respectively
to the leaves of $\mathcal{F}_0$
and $\mathcal{G}_0$.
 If $\Lambda_1$, $\Lambda_2$, $\Omega_1$, $\Omega_2$
 are the indecomposable Tutte coverings induced by 
$\mathcal{F}_1$, $\mathcal{F}_2$, $\mathcal{G}_1$, $\mathcal{G}_2$
respectively, then we evidently
 have the following equalities of multi-sets:

\begin{equation} \label{ind.partition} 
\mathrm{Ind}(\Lambda) = \mathrm{Ind}(\Lambda_1) \sqcup \mathrm{Ind}(\Lambda_2)
\end{equation}
\[
\mathrm{Ind}(\Omega) = \mathrm{Ind}(\Omega_1) \sqcup \mathrm{Ind}(\Omega_2).
\]

\vspace{0.2cm} 

We shall break down the rest of the inductive step into two cases.   \\ 

\textit{Case 1: $e$ and $f$ are either parallel or coparallel elements of $M$.} In this case, we have isomorphisms of the
form $M/e \cong M/f$ and $M\backslash e \cong M \backslash f$. These matroid isomorphisms induce 
in turn isomorphisms of multi-sets of the form
\[
\mathrm{Ind}(\Lambda_1) \cong  \mathrm{Ind}(\Omega_1) \qquad \qquad \mathrm{Ind}(\Lambda_2) \cong \mathrm{Ind}(\Omega_2),
\]
which then evidently imply that $\mathrm{Ind}(\Lambda) \cong  \mathrm{Ind}(\Omega)$.  \\ 

\textit{Case 2: $e$ and $f$ are not parallel or coparallel in $M$.} The assumption in this case implies that 
$f$ (resp. $e$) is a non-degenerate element in both $M/e$ and $M\backslash e$ 
(resp. $M/f$ and $M\backslash f$). This fact ensures that we can construct elementary 
deletion-contraction trees of the form 
\[
\begin{tikzpicture}
      \tikzset{enclosed/.style={draw, circle, inner sep=0pt, minimum size=.1cm, fill=black}}
      
      \node (m/e) at (-6, 0) {$M/e$};
      \node (m/e/f) at (-7, -1.5) {$M/e/f$};
      \node (m/e_f) at (-5, -1.5) {$M/e \backslash f$};
      
      \draw (m/e/f) edge[->] (m/e);
      \draw (m/e_f) edge[->] (m/e);
      
      \node (m_e) at (-2.5, 0) {$M\backslash e$};
      \node (m_e/f) at (-3.5, -1.5) {$M\backslash e/f$};
      \node (m_e_f) at (-1.5, -1.5) {$M \backslash e \backslash f$};
      
      \draw (m_e/f) edge[->] (m_e);
      \draw (m_e_f) edge[->] (m_e);
      
      \node (m/f) at (1, 0) {$M/f$};
      \node (m/f/e) at (0, -1.5) {$M/f/e$};
      \node (m/f_e) at (2, -1.5) {$M/f \backslash e$};
      
      \draw (m/f/e) edge[->] (m/f);
      \draw (m/f_e) edge[->] (m/f);
      
      \node (m_f) at (4.5, 0) {$M\backslash f$};
      \node (m_f/e) at (3.5, -1.5) {$M\backslash f/e$};
      \node (m_f_e) at (5.5, -1.5) {$M \backslash f \backslash e$.};
      
      \draw (m_f/e) edge[->] (m_f);
      \draw (m_f_e) edge[->] (m_f);
       
\end{tikzpicture}
\]
Take now indecomposable Tutte coverings
\[
\Lambda_{11} \qquad \Lambda_{12} \qquad \Lambda_{21} \qquad \Lambda_{22} \qquad  
\Omega_{11} \qquad \Omega_{12} \qquad \Omega_{21} \qquad \Omega_{22}
\]
for the matroids 
\[
M/e/f \qquad M/e \backslash f \qquad M\backslash e /f \qquad M\backslash e \backslash f \qquad
M/f/e \qquad M/f \backslash e \qquad M\backslash f /e \qquad M\backslash f \backslash e   
\]
respectively. It follows from the induction hypothesis
that we have isomorphisms of the form 
\begin{equation} \label{partition.isos}
\mathrm{Ind}(\Lambda_1) \sqcup \mathrm{Ind}(\Lambda_2)  \cong \mathrm{Ind}(\Lambda_{11}) \sqcup \mathrm{Ind}(\Lambda_{12}) \sqcup
\mathrm{Ind}(\Lambda_{21}) \sqcup \mathrm{Ind}(\Lambda_{22})
\end{equation}
\[
\mathrm{Ind}(\Omega_1) \sqcup \mathrm{Ind}(\Omega_2) \cong \mathrm{Ind}(\Omega_{11}) \sqcup \mathrm{Ind}(\Omega_{12}) \sqcup
\mathrm{Ind}(\Omega_{21}) \sqcup \mathrm{Ind}(\Omega_{22}).
\]
\vspace{0.2cm}
On the other hand, the identities  
\[
M/e/f = M/f/e \qquad M/e \backslash f = M\backslash f / e \qquad M\backslash e / f = M /f \backslash e 
\qquad M\backslash f \backslash e = M\backslash e \backslash f
\]
ensure that we also have isomorphisms 
\begin{equation} \label{parts.isos}
\mathrm{Ind}(\Lambda_{11}) \cong \mathrm{Ind}(\Omega_{11}) \qquad  \mathrm{Ind}(\Lambda_{12}) \cong \mathrm{Ind}(\Omega_{21}) \qquad
\mathrm{Ind}(\Lambda_{21}) \cong \mathrm{Ind}(\Omega_{12}) \qquad \mathrm{Ind}(\Lambda_{22}) \cong \mathrm{Ind}(\Omega_{22}).
\end{equation}
Then, by combining (\ref{ind.partition}), (\ref{partition.isos}), and (\ref{parts.isos}), we can conclude that 
$\mathrm{Ind}(\Lambda) \cong \mathrm{Ind}(\Omega)$, which is precisely what we wanted to prove. 
\end{proof}

We can now provide the final details of the proof of Theorem \ref{thma}.

\begin{proof}[Proof of Theorem \ref{thma}] 
From Theorem \ref{k0.result}, Proposition \ref{ind.refine}, and Proposition \ref{ind.ind}, it follows that 
$K_0(\mathbf{Mat}^{\mathrm{tc}})$ is the free abelian group generated by the set 
\[
\Big\{ \hspace{0.05cm} [\varepsilon^m \oplus \sigma^n] \hspace{0.15cm} | \hspace{0.15cm} m,n \geq 0  \hspace{0.05cm} \Big\}
\]
of isomorphism classes of indecomposable objects.  
Thus, the assignment $[\varepsilon^m \oplus \sigma^n] \mapsto x^my^n$
defines an isomorphism
\[
\rho: K_0(\mathbf{Mat}^{\mathrm{tc}}) \stackrel{\cong}{\longrightarrow} \mathbb{Z}[x,y] 
\]
of abelian groups. This concludes the proof of part (i) of Theorem \ref{thma}. 

For the second claim, consider the following diagram of abelian groups:
\begin{equation} \label{diag.thma}
\quad
\end{equation}
\vspace{-1cm}
\[
\begin{tikzpicture}
      \tikzset{enclosed/.style={draw, circle, inner sep=0pt, minimum size=.1cm, fill=black}}
      
      \node (k0i) at (0,0) {$K_0(\mathbf{Mat}^{\cong})$};
       \node (k0tc) at (3.5,0) {$K_0(\mathbf{Mat}^{\mathrm{tc}})$};
      \node (ZM) at (0,-1.75) {$\mathbb{Z}[\mathcal{M}]$};
      \node (Zxy) at (3.5,-1.75) {$\mathbb{Z}[x,y]$.};
      
      \draw (k0i) edge[->] node[midway, above] {$\gamma$}   (k0tc);
      \draw (k0i) edge[->] node[midway, left] {$=$}  (ZM);
      \draw (ZM) edge[->] node[midway, above] {$\textcolor{white}{a}_{\mathcal{T}}$}  (Zxy);
      \draw (k0tc) edge[->]  node[midway, right] {$\rho$} (Zxy) ;
      
      \end{tikzpicture}
\]    
The bottom map $\mathcal{T}$ is the \textit{Tutte polynomial map}, i.e.,
it is the group homomorphism which sends a generator $[M]$ to the
Tutte polynomial $T(M;x,y)$.  
We wish show that the diagram in (\ref{diag.thma}) commutes. In
other words, we must show that the map $\mathcal{T}$ is equal to the composition 
$\rho\circ \gamma$, which we will denote simply as $\widetilde{\mathcal{T}}$. 
To do this, we shall prove that the assignment
$[M] \mapsto \widetilde{\mathcal{T}}([M])$ has the following properties: 

\begin{enumerate}

\item $\widetilde{\mathcal{T}}([\varepsilon^m\oplus \sigma^n]) = x^my^n$ for any non-negative integers $m$ and $n$.  
\vspace{0.15cm}

\item If $e$ is a non-degenerate element of $M$, then $\widetilde{\mathcal{T}}([M]) = \widetilde{\mathcal{T}}([M / e]) + \widetilde{\mathcal{T}}([M \backslash e])$. 

\end{enumerate}

Once we have verified that the assignment 
$[M] \mapsto \widetilde{\mathcal{T}}([M])$ satisfies the two properties given above,
an induction argument identical to the one we did in the proof of Proposition \ref{ind.refine}
implies that 
\begin{equation} \label{tutte.equality}
\widetilde{\mathcal{T}}([M]) = \mathcal{T}([M])
\end{equation}
for any matroid $M$, i.e., we first show that
(\ref{tutte.equality}) holds for isomorphism classes of the form 
$[\varepsilon^m\oplus \sigma^n]$ and then extend this result 
inductively to all matroids by contracting and deleting non-degenerate elements.  
Thus, once we verify properties (1) and (2) above for $\widetilde{\mathcal{T}}$, 
it will automatically follow that the diagram in (\ref{diag.thma}) commutes. 

To show that $\widetilde{\mathcal{T}}$ satisfies the desired properties, it is convenient
to have different notation for generators in $K_0(\mathbf{Mat}^{\cong})$ and $K_0(\mathbf{Mat}^{\mathrm{tc}})$: 
We shall continue to denote the generator in $K_0(\mathbf{Mat}^{\cong})$ corresponding to a matroid 
$M$ by $[M]$; on the other hand, in $K_0(\mathbf{Mat}^{\mathrm{tc}})$, we shall denote the generator
corresponding to $M$ by $\langle M \rangle$.  
By the definition of the maps $\gamma: K_0(\mathbf{Mat}^{\cong}) \rightarrow K_0(\mathbf{Mat}^{\mathrm{tc}})$
and $\rho: K_0(\mathbf{Mat}^{\mathrm{tc}}) \rightarrow \mathbb{Z}[x,y]$, we have that
\[
\widetilde{\mathcal{T}}([\varepsilon^m\oplus \sigma^n]) = \rho\circ \gamma(([\varepsilon^m\oplus \sigma^n])) = \rho(\langle \varepsilon^m\oplus \sigma^n \rangle) = x^my^n,
\]
which shows that the assignment $[M] \mapsto \widetilde{\mathcal{T}}([M])$ satisfies property (1). 
On the other hand, if $e$ is a non-degenerate element of a matroid $M$, then the standard inclusions 
$M/e \hookrightarrow M$ and $M\backslash e \hookrightarrow M$ form a Tutte covering for $M$. It follows that
\[
\rho\big(\gamma([M]\big) = \rho(\langle M \rangle) = \rho(\langle M/e \rangle) + \rho(\langle M\backslash e \rangle) = 
\rho\big(\gamma([M/e]) \big) + \rho\big(\gamma([M\backslash e]) \big).
\]
Therefore, the assignment $[M] \mapsto \widetilde{\mathcal{T}}([M])$ also satisfies property (2). 
By our previous discussion, we can now conclude that diagram (\ref{diag.thma}) is commutative.  
\end{proof}

\subsection{Ring structures} \label{ring.struct.sec}

For the proof of Theorem \ref{thmb}, we also need to show that the covering family structure 
on $\mathbf{Mat}^{\mathrm{tc}}$ is distributive with respect to direct sums, as indicated in the next proposition.

\theoremstyle{plain}  \newtheorem{tutcov.oplus}[iso.cov.str]{Proposition}

\begin{tutcov.oplus} \label{tutcov.oplus}
Consider two matroids $M, M' \neq \ast$ in $\mathbf{Mat}_{+}$.
If $\Lambda = \{f_i: N_i \rightarrow M\}_{i \in I}$ and 
$\Omega = \{g_j: N'_j \rightarrow M'\}_{j \in J}$ are Tutte coverings 
for $M$ and $M'$ respectively, then the multi-morphism 
\begin{equation} \label{oplus.map}
\{ f_i\oplus g_j : N_i \oplus N'_j \rightarrow M \oplus M' \}_{(i,j)\in I\times J}
\end{equation}
is a Tutte covering for $M \oplus M'$. Moreover, if 
$\Lambda$ and $\Omega$ are indecomposable 
Tutte coverings, then so is the Tutte covering in (\ref{oplus.map}).  
\end{tutcov.oplus}

\begin{proof}
The morphism $f_i \oplus g_j$ appearing in the multi-set (\ref{oplus.map})
is the morphism whose underlying set function is the coproduct $E_{N_i} \sqcup E_{N'_j}\rightarrow E_{M} \sqcup E_{N}$
of the set functions $E_{N_i} \rightarrow E_{M}$ and $E_{N'_j} \rightarrow E_{N}$ underlying $f_i$ and $g_j$ respectively.
To prove this proposition, we must show that (\ref{oplus.map})
can be realized as the collection of leaf-to-root morphisms 
of some deletion-contraction tree $\mathcal{F}$. 
Fix then deletion-contraction trees $\mathcal{F}_1$ and $\mathcal{F}_2$
so that $\{f_i: N_i \rightarrow M\}_{i \in I}$ and 
$\{g_j: N'_j \rightarrow M'\}_{j \in J}$ are the leaf-to-root morphisms of
$\mathcal{F}_1$ and $\mathcal{F}_2$ respectively. 
From now on, we will assume that the indexing set $I$ is of the form
$I = \{ 1, \ldots, p\}$.
Also, we will denote the identity morphisms 
$\mathrm{Id}_{M'}$, $\mathrm{Id}_{N_1}$, \ldots \ldots, $\mathrm{Id}_{N_p}$
by $h'$, $h_1$, \ldots, $h_p$ respectively.  
Note that we can produce a deletion-contraction tree 
$\mathcal{F}'_1$ rooted at $M\oplus M'$ by applying the 
functor $-\oplus M'$ to the tree $\mathcal{F}_1$. Then, 
the multi-set of  leaf-to-root morphisms of $\mathcal{F}'_1$
is equal to 
\[
\big\{f_i\oplus h': N_i \oplus M' \rightarrow M\oplus M'\big\}_{i \in \{1, \ldots, p\} }.
\]  
Similarly, for each $i \in I =  \{ 1, \ldots, p\}$, we can produce a 
deletion-contraction tree $\mathcal{F}'_{2i}$ 
rooted at $N_i \oplus M'$ by applying the functor 
$N_i \oplus -$ to $\mathcal{F}_2$.   
Evidently, for $i$ in $\{ 1, \ldots, p\}$, the multi-set of 
leaf-to-root morphisms of $\mathcal{F}'_{2i}$ is 
\[
\big\{h_i\oplus g_j: N_i \oplus N'_j \rightarrow N_i \oplus M'\big\}_{j \in J}.
\]  
We can then obtain the desired deletion-contraction tree $\mathcal{F}$ 
by attaching $\mathcal{F}'_{21}$, \ldots, $\mathcal{F}'_{2p}$
to the tree $\mathcal{F}'_1$ at the leaves 
$N_1 \oplus M'$, \ldots, $N_p \oplus M'$ respectively. 
By construction, the multi-set of leaf-to-root morphisms of 
$\mathcal{F}$ matches the one given in (\ref{oplus.map}),
which concludes the proof of the first claim.
The second claim follows from the fact that
the direct sum of two indecomposable objects 
is again indecomposable.  
\end{proof}

The morphisms $\gamma, \rho,$ and $\mathcal{T}$ appearing in 
the statement of Theorem \ref{thma} are for now just homomorphisms 
of abelian groups. However, as noted in the introduction,
we can use the direct sum operation to endow both $K_0(\mathbf{Mat}^{\cong})$
and $K_0(\mathbf{Mat}^{\mathrm{tc}})$ with ring structures.
With these ring structures in place,   
the morphisms in Theorem \ref{thma} become  ring homomorphisms, as claimed
in Theorem \ref{thmb}.
We will provide the details of these constructions in the following proof.  

\begin{proof}[Proof of Theorem \ref{thmb}]
Let us start by defining a product on $\mathbb{Z}[\mathcal{M}]$ (which, as remarked several times before, 
is equal to $K_0(\mathbf{Mat}^{\cong})$). We define a product on the generators of 
$\mathbb{Z}[\mathcal{M}]$ via the rule
\begin{equation} \label{ring.def}
[M]\cdot [N] := [M \oplus N]. 
\end{equation}
Evidently, if $M \cong M'$ and $N \cong N'$, then 
$M \oplus N \cong M' \oplus  N'$. Thus, the operation 
in (\ref{ring.def}) is well-defined. 
We then extend this product 
to arbitrary pairs in $\mathbb{Z}[\mathcal{M}]$ by setting 
\begin{equation} \label{ring.def.2}
\sum_{i} a_i[M_i] \cdot \sum_{j} b_j[N_j] := \sum_{i}\sum_{j} a_ib_j [M_i \oplus N_j].
\end{equation}
If $+$ is the addition operation on $\mathbb{Z}[\mathcal{M}]$, then it is 
straightforward to verify that the triple $(\mathbb{Z}[\mathcal{M}], +, \cdot)$ 
is a commutative ring. Furthermore, the multiplicative unit 
of this ring is the class $[\varnothing]$ corresponding to the empty matroid. 

Now, consider the subgroup $H$ of  $\mathbb{Z}[\mathcal{M}]$
generated by elements of the form
\begin{equation} \label{gen.thmb}
[N] -\big( [N_1] + \ldots\ldots + [N_p] \big),
\end{equation}
where $N_1$, \ldots, $N_p$ are the domains 
of a Tutte covering for $N$.
As explained in
Remark \ref{cov.mat.morhp}, the group 
$K_0(\mathbf{Mat}^{\mathrm{tc}})$ is equal to the 
quotient $\mathbb{Z}[\mathcal{M}]/H$, and the morphism 
$\gamma: K_0(\mathbf{Mat}^{\cong}) \rightarrow K_0(\mathbf{Mat}^{\mathrm{tc}})$
between $K_0$ groups is equal to
the canonical quotient map 
$\mathbb{Z}[\mathcal{M}] \rightarrow \mathbb{Z}[\mathcal{M}]/H$. 
To prove Theorem \ref{thmb},
we will first show that the product defined in (\ref{ring.def.2}) descends
to a product in  $\mathbb{Z}[\mathcal{M}]/H$.  
To accomplish this,
it is enough to show that $H$ is an ideal of the ring 
$(\mathbb{Z}[\mathcal{M}], +, \cdot)$. 
Moreover, to verify that $H$ is an ideal, it is enough to show
that the product of a generator $[M]$ of $\mathbb{Z}[\mathcal{M}]$
and a generator of $H$ of the form (\ref{gen.thmb}) lies in $H$. 
Fix then an arbitrary matroid $M$ and a Tutte covering 
$\{ g_j: N_j \rightarrow N\}_{j=1,\ldots,p}$ for a matroid $N$,
and consider the product 
\begin{equation} \label{prod.thmb}
[M]\cdot \Big( [N] -\big( [N_1] + \ldots\ldots + [N_p] \big)\Big).
\end{equation}
By the definition of the product $\cdot$ on $\mathbb{Z}[\mathcal{M}]$, we can express (\ref{prod.thmb}) as
\begin{equation} \label{prod2.thmb}
[M\oplus N] - \big(  [M\oplus N_1] + \ldots\ldots + [M \oplus N_p] \big).
\end{equation}
But, by Proposition \ref{tutcov.oplus}, the multi-set 
$\{ \mathrm{Id}_M \oplus g_j: M \oplus N_j \rightarrow M \oplus N\}_{j = 1,\ldots,p}$ is a Tutte covering
for $M\oplus N$. Consequently, the element in (\ref{prod2.thmb}) (or, equivalently,
the product in (\ref{prod.thmb})) lies in $H$. 
We can therefore conclude that
$H$ is an ideal of $\mathbb{Z}[\mathcal{M}]$ and that
the product $\cdot$ on $\mathbb{Z}[\mathcal{M}]$ descends 
to a product on $\mathbb{Z}[\mathcal{M}]/H$, 
which we shall also denote by $\cdot$.

Since the product $\cdot$
in  $\mathbb{Z}[\mathcal{M}]/H$ is induced by the one
in $\mathbb{Z}[\mathcal{M}]$, we evidently have that 
the group homomorphism 
$\gamma: \mathbb{Z}[\mathcal{M}] \rightarrow \mathbb{Z}[\mathcal{M}]/H$
preserves products, i.e., it is a ring homomorphism. 
Thus, to finish the proof of Theorem \ref{thmb},
we just need to show that the group isomorphism 
$\rho: \mathbb{Z}[\mathcal{M}]/H \rightarrow \mathbb{Z}[x,y]$
also preserves products.
To do this,
we shall from now on denote the classes 
$[\varepsilon]$ and $[\sigma]$ in 
 $\mathbb{Z}[\mathcal{M}]/H$
 (i.e., the classes represented by
 a single isthmus and a single loop respectively) simply by
 $\varepsilon$ and $\sigma$. 
 Also, any product of the form $\varepsilon^n\cdot \sigma^m$
 in  $\mathbb{Z}[\mathcal{M}]/H$ shall be written
 as $\varepsilon^n \sigma^m$. 
With this change in notation, 
we have that any 
element $\alpha$ of $\mathbb{Z}[\mathcal{M}]/H$ 
can be expressed uniquely as
$\alpha = \sum_{i=1}^{K} a_i\varepsilon^{n_i}\sigma^{m_i}$,
where $a_i \in \mathbb{Z}$. Also, by applying the distributive
property of the product $\cdot$, 
we have that the product $\alpha \cdot \alpha'$ of two elements 
$\alpha = \sum_{i=1}^{K} a_i\varepsilon^{n_i}\sigma^{m_i}$ and $\alpha' = \sum_{j=1}^{K'} b_j\varepsilon^{n_j'}\sigma^{m_j'}$
in $\mathbb{Z}[\mathcal{M}]/H$ is equal to 
\[
\alpha\cdot \alpha' = \sum_{i=1}^{K}\sum_{j = 1}^{K'} a_ib_j\varepsilon^{n_i + n'_j}\sigma^{m_i + m'_j}.  
\]
Therefore, as a ring, the quotient $\mathbb{Z}[\mathcal{M}]/H$ is equal to the
polynomial ring $\mathbb{Z}[\varepsilon, \sigma]$ generated by
 $\varepsilon$ and $\sigma$. Then, since
 the products in $\mathbb{Z}[\mathcal{M}]/H = \mathbb{Z}[\varepsilon, \sigma]$
 and $\mathbb{Z}[x, y]$ are identical once we replace 
 $\varepsilon$ and $\sigma$ with $x$ and $y$ respectively, 
 the map $\rho: \mathbb{Z}[\mathcal{M}]/H \rightarrow \mathbb{Z}[x, y]$
 also preserves products, which concludes the proof of this theorem.  
\end{proof}

\vspace{0.2cm}

The ring $\mathbb{Z}[\varepsilon, \sigma]$ introduced
in the last paragraph of the
proof of Theorem \ref{thmb} is known as the 
\textit{Tutte-Grothendieck ring}, defined by 
Brylawski in \cite{Br71}. As we did in the introduction, we shall denote this ring 
by $\mathcal{R}_{\mathrm{TG}}$.    

Now, consider again the free abelian group $\mathbb{Z}[\mathcal{M}]$
as a ring, where the product is the one we defined in the proof
of Theorem \ref{thmb} (i.e., the product defined via the rule
$[M]\cdot [N] := [M\oplus N]$).
As we showed in the proof of Theorem \ref{thmb}, 
the ring $K_0(\mathbf{Mat}^{\mathrm{tc}})$ is equal to
$\mathcal{R}_{\mathrm{TG}}$.
Then, as we previewed in the introduction, 
we can write the ring homomorphism  
$\gamma: K_0(\mathbf{Mat}^{\cong}) \rightarrow K_0(\mathbf{Mat}^{\mathrm{tc}})$ as  
\[
\gamma: \mathbb{Z}[\mathcal{M}] \longrightarrow \mathcal{R}_{\mathrm{TG}}.
\]
This map 
$\gamma$ is the ring homomorphism which
sends an isomorphism class $[M]$ to the polynomial
$T(M; \varepsilon, \sigma)$, i.e., the element in
$\mathcal{R}_{\mathrm{TG}} = \mathbb{Z}[\varepsilon, \sigma]$
obtained by evaluating the Tutte polynomial 
$T(M; x, y)$ on $\varepsilon$ and $\sigma$. 

In \cite{Br71}, Brylawski showed  that
the ring homomorphism 
$\gamma: \mathbb{Z}[\mathcal{M}] \rightarrow \mathcal{R}_{\mathrm{TG}}$
(called the \textit{Tutte polynomial} in \cite{Br71})
is actually the \textit{universal Tutte-Grothendieck invariant.}
More precisely, a \textit{Tutte-Grothendieck invariant} is a ring homomorphism 
$f: \mathbb{Z}[\mathcal{M}] \rightarrow R$ with the following properties 
(see Chapter \S 9 of \cite{GMc}):  

\begin{itemize}

\vspace{0.1cm} 

\item[$\cdot$] $f([M]) = f([M/e]) + f([M\backslash e])$ if $e$ is a non-degenerate element of $M$. 

\vspace{0.2cm} 

\item[$\cdot$] $f([M]) = f(\varepsilon)\cdot f([M/e])$ if $e$ is an isthmus of $M$. 

\vspace{0.2cm} 

\item[$\cdot$] $f([M]) = f(\sigma)\cdot f([M\backslash e])$ if $e$ is a loop of $M$.

\end{itemize}

\vspace{0.1cm}

Then, $\gamma: \mathbb{Z}[\mathcal{M}] \rightarrow \mathcal{R}_{\mathrm{TG}}$
is the universal Tutte-Grothendieck invariant
in the sense that,
given any Tutte-Grothendieck invariant $f: \mathbb{Z}[\mathcal{M}] \rightarrow R$, there
exists a unique ring homomorphism $g: \mathcal{R}_{\mathrm{TG}} \rightarrow R$
making the following diagram of rings commute: 
\[
\begin{tikzpicture}
      \tikzset{enclosed/.style={draw, circle, inner sep=0pt, minimum size=.1cm, fill=black}}

       \node (1) at (0,0) {$\mathbb{Z}[\mathcal{M}] $};
       \node (2) at (2.5,0) {$\mathcal{R}_{\mathrm{TG}}$};
       \node (3) at (2.5,-1.75) {$R$.}; 
      
      \draw (1) edge[->] node[midway,above] {\small $\gamma$} (2);
      \draw (1) edge[->] node[midway,below, yshift=-0.05cm, xshift = -0.15cm] {\small$f$} (3);
      \draw (2) edge[->] node[midway,right] {\small $g$} (3);
      
\end{tikzpicture}
\]   
It is a consequence of Theorem \ref{thma} that the universal Tutte-Grothendieck invariant 
$\gamma$ lifts (as a morphism of abelian groups) to a map of spectra. 
This is precisely the result given in Theorem \ref{thmc}, which we state 
again for the sake of completeness and presentation.    

\theoremstyle{plain}  \newtheorem{klift}[iso.cov.str]{Theorem}

\begin{klift} \label{klift}
The map of $K$-theory spectra 
\[
K(\Gamma): K(\mathbf{Mat}^{\cong}) \rightarrow  K(\mathbf{Mat}^{\mathrm{tc}})
\]
induced by the morphism $\Gamma: \mathbf{Mat}^{\cong} \rightarrow \mathbf{Mat}^{\mathrm{tc}}$ 
is a lift of the universal Tutte-Grothendieck invariant 
$\gamma: \mathbb{Z}[\mathcal{M}] \rightarrow \mathcal{R}_{\mathrm{TG}}$
(as a morphism of abelian groups) to the category of spectra.
\end{klift}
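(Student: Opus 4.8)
The plan is to derive Theorem \ref{klift} almost formally from Theorem \ref{thma} and Theorem \ref{thmb}, together with the functoriality of the $K$-theory construction recalled in Note \ref{K-cons}. First I would observe that $\mathcal{C} \mapsto K(\mathcal{C})$ is a functor from $\mathbf{CatFam}$ to spectra: by \cite{BGMMZ} the category-of-covers construction of Definition \ref{cat.of.cov} is a functor $W(-)$ from $\mathbf{CatFam}$ to pointed categories, and passing through the nerve, the realization, and the $\Gamma$-space machinery of Note \ref{K-cons} sends a morphism of categories with covering families to a map of symmetric spectra. Applying this to the morphism $\Gamma \colon \mathbf{Mat}^{\cong} \to \mathbf{Mat}^{\mathrm{tc}}$ of Remark \ref{cov.mat.morhp} yields the map $K(\Gamma)$ appearing in the statement, so the only thing left to check is what $K(\Gamma)$ does on $\pi_0$.

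Second, I would identify $\pi_0 K(\Gamma)$ with $\gamma$. By definition $\pi_0 K(\mathcal{C}) = K_0(\mathcal{C})$, and Theorem \ref{k0.result} (which is Proposition 3.8 of \cite{BGMMZ}, with the same proof as Theorem 2.13 of \cite{Zak17}) gives the explicit presentation of $K_0(\mathcal{C})$ by covering-family relations. The identification of the homotopy-theoretic $\pi_0$ with this algebraically defined $K_0$ is natural in $\mathbf{CatFam}$; consequently $\pi_0 K(\Gamma)$ is exactly the homomorphism $\gamma \colon K_0(\mathbf{Mat}^{\cong}) \to K_0(\mathbf{Mat}^{\mathrm{tc}})$ of (\ref{second.gamma}). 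This is the one step that involves an actual verification rather than pure bookkeeping --- namely that the $\Gamma$-space $\pi_0$ and the combinatorial $K_0$ agree naturally --- but it is precisely what is established in \cite{BGMMZ}, so I would cite it rather than reprove it.

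Third, I would feed in the two main theorems. By Theorem \ref{thma}(i) the map $\rho$ identifies $K_0(\mathbf{Mat}^{\mathrm{tc}})$ with $\mathbb{Z}[x,y]$, which by Theorem \ref{thmb} is, as a ring, the Tutte-Grothendieck ring $\mathcal{R}_{\mathrm{TG}} = \mathbb{Z}[\varepsilon,\sigma]$; and, combined with the identification $K_0(\mathbf{Mat}^{\cong}) = \mathbb{Z}[\mathcal{M}]$, Theorem \ref{thma}(ii) says that $\rho \circ \gamma = \mathcal{T}$ is the Tutte polynomial map, which by Brylawski's work in \cite{Br71} is the universal Tutte-Grothendieck invariant. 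Stringing these together gives $\pi_0 K(\Gamma) = \gamma \colon \mathbb{Z}[\mathcal{M}] \to \mathcal{R}_{\mathrm{TG}}$, which is exactly the assertion that $K(\Gamma)$ lifts the universal Tutte-Grothendieck invariant --- as a map of abelian groups, i.e. on $\pi_0$ --- to the category of spectra.

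I expect no genuine obstacle here: the mathematical content is entirely inside Theorem \ref{thma}, and Theorem \ref{klift} is a repackaging of it. The only mildly delicate point is the naturality invoked in the second step, that the isomorphism between $\pi_0$ of the $\Gamma$-space spectrum and the $K_0$ of Theorem \ref{k0.result} respects morphisms of $\mathbf{CatFam}$, hence carries $\pi_0 K(\Gamma)$ onto $\gamma$. Since this compatibility is already implicit in \cite{BGMMZ} (and in the assembler case in \cite{Zak17}), I would record it as a citation and not belabor it.
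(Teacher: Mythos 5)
Your proposal is correct and follows essentially the same route as the paper, which presents Theorem \ref{klift} as a direct consequence of Theorem \ref{thma} together with the identification (after Theorem \ref{thmb}) of $K_0(\mathbf{Mat}^{\mathrm{tc}})$ with $\mathcal{R}_{\mathrm{TG}}$ and of $\gamma$ with Brylawski's universal Tutte--Grothendieck invariant. The only difference is that you make explicit the functoriality of $K(-)$ on $\mathbf{CatFam}$ and the naturality of the $\pi_0$--$K_0$ identification from \cite{BGMMZ}, which the paper leaves implicit.
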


\theoremstyle{definition}  \newtheorem{klift.rem}[iso.cov.str]{Note}

\begin{klift.rem} \label{klift.rem}
In light of the previous theorem, it is natural to ask whether it is
possible to lift the universal Tutte-Grothendieck invariant as a 
ring homomorphism. We conjecture that this is indeed possible by 
adapting the methods of Zakharevich from \cite{Zak22} to the 
context of categories with covering families. 
In \cite{Zak22}, Zakharevich introduced the notion of symmetric 
monoidal assembler (Definition 7.10 in \cite{Zak22}),
and showed that the $K$-theory of a symmetric monoidal assembler 
$\mathcal{C}$ (which we shall denote by $K^{\mathrm{Asm}}(\mathcal{C})$)
is an $E_{\infty}$-ring spectrum, which implies that  
$K_0^{\mathrm{Asm}}(\mathcal{C})$ is a ring (Theorem 1.11 in \cite{Zak22}).
Additionally, a morphism $\mathcal{C} \rightarrow \mathcal{D}$ 
of symmetric monoidal assemblers induces a map 
$K^{\mathrm{Asm}}(\mathcal{C}) \rightarrow K^{\mathrm{Asm}}(\mathcal{D})$
of $E_{\infty}$-ring spectra, which in turn induces a ring homomorphism at
the $K_0$ level. 

In a forthcoming paper, by adapting the work of 
Zakharevich, we shall prove that the categories 
with covering families $\mathbf{Mat}^{\cong}$ 
and $\mathbf{Mat}^{\mathrm{tc}}$ are both
\textit{symmetric monoidal}, in a sense similar to 
Definition 7.10 in \cite{Zak22}. 
The monoidal structure on 
$\mathbf{Mat}^{\cong}$ and $\mathbf{Mat}^{\mathrm{tc}}$
is determined by the direct sum operation on matroids. 
Moreover, we shall also prove that the morphism 
$K(\mathbf{Mat}^{\cong}) \rightarrow K(\mathbf{Mat}^{\mathrm{tc}})$
is in fact a map of $E_{\infty}$-ring spectra, thus yielding a
spectrum-level lift of the universal Tutte-Grothendieck invariant 
as a ring homomorphism. 
\end{klift.rem}

\end{document}